\documentclass[11pt]{amsart}\usepackage[left=3cm,right=3cm,top=3cm,bottom=3cm,a4paper]{geometry}   	 
\usepackage{mathtools}
\usepackage{amscd, amssymb, mathrsfs, mathabx, tikz-cd, amsthm, thmtools, stmaryrd}
\usepackage{kotex}       
\usepackage{todonotes}       
\input{xy}
\xyoption{all}

\usepackage{hyperref}
\hypersetup{
colorlinks   = true,          
urlcolor     = green,          
linkcolor    = blue,          
citecolor   = red          
}

\numberwithin{equation}{section}

\newcommand{\CC}{\mathbb{C}}

\newcommand{\PP}{\mathbb{P}}

\newcommand{\RR}{\mathbb{R}}

\newcommand{\ZZ}{\mathbb{Z}}

\newcommand{\cal}{\mathcal}

\newcommand\cA{{\cal A}}

\newcommand\cE{{\cal E}}
\newcommand\cF{{\cal F}}
\newcommand\cG{{\cal G}}
\newcommand\cH{{\cal H}}
\newcommand\cI{{\cal I}}

\newcommand\cQ{{\cal Q}}

\newcommand\cW{{\cal W}}

\def\fE{\mathfrak{E}}
\def\fF{\mathfrak{F}}

\def\fP{\mathfrak{P}}

\def\fZ{\mathfrak{Z}}

 \DeclareMathOperator{\Hom}{Hom}
\DeclareMathOperator{\End}{End}  
 
\DeclareMathOperator{\id}{id} 
 
\DeclareMathOperator{\tr}{tr} 

\DeclareMathOperator{\rank}{rank}

\DeclareMathOperator{\ad}{ad}

\def\vphi{\varphi}

\def\surra{\twoheadrightarrow}

\def\hra{\hookrightarrow}

\def\lra{\longrightarrow }
\def\bar{\overline}
\def\rou{\partial}

\def\wtil{\widetilde}

\def\ker{\mathrm{ker}}

\def\deg{\mathrm{deg}}
\def\dim{\mathrm{dim}}
\def\codim{\mathrm{codim}}

\def\rank{\mathrm{rank}}

\def\Vol{\mathrm{Vol}}
\def\Higgs{\mathrm{Higgs}}
\def\fg{\mathfrak{g}}
\def\fz{\mathfrak{z}}
\def\fp{\mathfrak{p}}
\def\fh{\mathfrak{h}}

\newtheorem{prop}{Proposition}[section]
\newtheorem{theo}[prop]{Theorem}
\newtheorem{lemm}[prop]{Lemma}
\newtheorem{coro}[prop]{Corollary}
\newtheorem{rema}[prop]{Remark}

\newtheorem{defi}[prop]{Definition}

\def\dual{^{\vee}}

\def\Sym{\mathrm{Sym}}

\def\Quot{\mathrm{Quot}}

\def\Gr{\mathrm{Gr}}

\def\SL{\mathrm{SL}}

\def\Ad{\mathrm{Ad}}

\def\Bun{\mathrm{Bun}}

\newcommand{\set}[1]{\{ #1 \}}

\newcommand{\bracket}[1]{\langle #1 \rangle}

\title[]{Infinitesimal automorphisms and obstruction theory on the moduli of $L$-valued $G$-Higgs bundles}

\author{Sanghyeon Lee}
\address{Ajou University,
206 World cup-ro, Suwon, Republic of Korea}
\email{sanghyeon25@ajou.ac.kr}

\author{Sang-Bum Yoo}
\address{Department of Mathematics Education, Gongju National University of Education, 27 Ungjin-ro, Gongju-si, Chungcheongnam-do, 32553, Republic of Korea}
\email{sbyoo@gjue.ac.kr}

\begin{document}

\maketitle

\begin{abstract}
For an arbitrary reductive group $G$, we compute the infinitesimal automorphisms of $L$-valued principal $G$-Higgs bundles over a compact K\"ahler manifold $X$, extending known results for $\Omega_X^{1}$-valued $G$-Higgs bundles. 

Using this computation, when $G$ is semisimple and $X$ is a smooth projective variety, we show that the moduli stack of stable $L$-valued $G$-Higgs bundles is a Deligne-Mumford (DM) stack. 

Furthermore, when $X$ is a smooth projective surface and $L=K_X$, we construct a symmetric perfect obstruction theory on this stable locus. We expect this will provide a foundation for defining Vafa-Witten invariants for reductive groups $G$.
\end{abstract}

\setcounter{tocdepth}{2}

\tableofcontents

\section{Introduction}

Let $X$ be a compact connected smooth K\"ahler manifold, equipped with a fixed K\"ahler form $\omega$. For a reductive linear algebraic group $G$, the Hitchin-Kobayashi type correspondence for the stability of principal $G$-bundles on $X$, and $G$-bundles with additional data has been studied for a long time.

In \cite{AB01}, B. Anchouche and I. Biswas proved that a holomorphic principal $G$-bundle $E_G$ has an Einstein-Hermitian connection if and only if $E_G$ is polystable. (See \cite[Definition 3.5]{AB01} for the definition of polystability.) As an intermediate step, they also proved that if $E_G$ is stable, then the adjoint vector bundle $\ad(E_G):= E_G \times_G \fg$ is polystable where $\fg$ is the Lie algebra of the Lie group $G$. 

Furthermore, in \cite{BS09}, I. Biswas and G. Schumacher proved that an $\Omega_X^{1}$-valued $G$-Higgs sheaf admits a hermitian Yang-Mills-Higgs connection if and only if it is polystable. \footnote{$\Omega_X^{1}$ is a sheaf of holomorphic 1-forms over $X$.} 
Moreover, in \cite{Bis09}, I. Biswas proved that the infinitesimal automorphisms of an $\Omega_X^{1}$-valued stable $G$-Higgs sheaves is isomorphic to the center of the Lie algebra, $\fz(\fg)$.

In Sections \ref{sect:stability} and \ref{sect:infaut} of this paper, we will extend the computation of infinitesimal automorphisms to $L$-valued $G$-Higgs bundles, $(E_G,\vphi)$ where $L$ is a vector bundle on $X$ and $\vphi \in H^0(X,\ad(E_G)\otimes L)$. For that, we use a generalized Kobayashi-Hitchin correspondence in \cite{ACGP03}. For a quiver $Q$, L. Álvarez-Cónsul and O. García-Prada proved that a holomorphic twisted quiver bundle is polystable if and only if it admits a hermitian metric satisfying a quiver vortex equation. Note that a twisted quiver bundle corresponding to a quiver $Q$ is a collection of vector bundles attached to each vertex of the quiver $Q$, and the data of morphisms of vector bundles with some twistings attached to each arrow of $Q$. See \cite[Definition 1.1]{ACGP03} for details of the twisted quiver bundles. When $Q$ is a quiver with a single vertex and a single arrow, and the twisting is given by $(\Omega_X^{1})\dual$, \cite[Theorem 3.1]{ACGP03} recovers the Kobayashi-Hitchin correspondence for $\Omega_X^{1}$-valued Higgs bundles in \cite{Hit87, Sim88}. 

As an application of the computation of the infinitesimal automorphisms to the case where $X$ is a smooth projective variety, in Section \ref{sect:POT}, we will prove that the moduli space of $L$-valued stable $G$-Higgs bundles, denoted by $\Higgs_L^s(X)$ is a Deligne-Mumford (DM) stack. Moreover, when $\dim(X)=2$ and $L=K_X$, we will prove that the moduli $\Higgs_{K_X}^s (X)$ is equipped with a symmetric perfect obstruction theory. We expect that this study may provide a foundation to define Vafa-Witten invariants for a general reductive group $G$.

Note that in the case when $G$ is a symplectic group or special orthogonal group and $L=K_X$, a perfect obstruction theory is constructed in \cite{Sch25}. They considered the moduli space of symplectic Higgs bundles and orthogonal Higgs bundles as the fixed loci of the moduli space of Higgs bundles with fixed determinant(say $\Higgs_\SL$), via the $\ZZ_2$-action acting on the Higgs fields by $\vphi \mapsto \vphi\dual$. They proved that the derived structure of $\Higgs_\SL$ induces a derived structure on the $\ZZ_2$-fixed locus. 

In the future, we would like to generalize the results of this paper to Gieseker stable $G$-Higgs sheaves. When we consider a minimal second Chern class $c_{2, \min} \in \ZZ$, similarly to the symplectic and orthogonal cases in \cite{Sch25}, the moduli space of $K_X$-valued $G$-Higgs bundles with the minimal second Chern class 
has a compact $\CC^*$-fixed locus, where the action is given by scaling of the Higgs fields $\vphi$. Then, we can define Vafa-Witten invariants of the group $G$ by using the equivariant virtual fundamental class in the same manner as in \cite{TT19}.


\subsection*{Acknowledgements} We especially thank Yaoxiong Wen for the long-standing discussions regarding Higgs bundles and their moduli. We thank Georg Schumacher for explaining the details in the paper \cite{BS09}. We also thank Yuuji Tanaka for explaining the generalized Kobayashi-Hitchin correspondence in \cite{ACGP03} in detail. 

Sanghyeon Lee was supported by the National Research Foundation of Korea(NRF) grant funded by the Korea government (MSIT) (RS-2026-25483737).

\section{Stability condition and Yang-Mills-Higgs equation for principal $L$-valued $G$-Higgs bundles.} \label{sect:stability}
Let us denote by $\Lambda_{\omega}$ the contraction operator with respect to the K\"ahler form $\omega$. 
Let $G$ be a connected reductive linear algebraic group defined over $\mathbb{C}$ with Lie algebra $\mathfrak{g}$. 
An \textbf{$L$-valued principal $G$-Higgs bundle} on $X$ is a pair $(E_G,\vphi)$ where $E_G$ is a principal $G$-bundle on $X$ and $\vphi$ is a section:
\[
\vphi \in H^0(X, \ad(E_G) \otimes L)
\]
where $L$ is a holomorphic vector bundle on $X$ equipped with a Hermitian metric. We call the section $\vphi$ an ($L$-valued) Higgs field. 
Following \cite{Bis09}, we define the stability of $L$-valued $G$-Higgs bundle as follows. In this paper, we call an open set $U \subset X$ a \textbf{big open subset} if $\codim(X\setminus U) \ge 2$.

On the other hand, when we use the term $L$-valued Higgs (not $G$-Higgs) bundle, it means a pair $(E,\vphi)$ where $E$ is a vector bundle and $\vphi \in \Hom(E, E\otimes L)$.

\begin{defi}\label{def:stability of VW G-bundle}
The $L$-valued $G$-Higgs bundle $(E_G,\vphi)$ on $X$ is called \textbf{semistable} (resp. \textbf{stable}) if for any maximal parabolic subgroup $P \subset G$ and its holomorphic reduction $E_P \subset E_G|_{U}$ over some big open subset $U$ such that the Higgs field $\vphi|_{U} \in H^0(U, \ad(E_G) \otimes L)$ restricts to $\vphi|_{U} \in H^0(U, \ad(E_P)\otimes L)$, we always have the inequality:
\[
\deg(\ad(E_G|_{U})) \ge \deg(\ad(E_P)) \,(\text{resp. }\deg(\ad(E_G|_{U})) > \deg(\ad(E_P))) \, .
\]
\end{defi}


\begin{rema}
In \cite{AB01}, the definition of stability is slightly different as follows. In \cite{AB01}, the authors considered the $L=\Omega_X^{1}$ case. A principal ($\Omega_X^{1}$-valued) $G$-Higgs bundle $(E_{G},\vphi)$ on $X$ is called \textbf{semistable} (resp. \textbf{stable}) if for any maximal parabolic subgroup $P\subset G$ and for any reduction of the structure group $\sigma:U\to E_{G}/P := E_G \times_G (G/P)$ over some open subset $U$ with $\codim(X\setminus U)\ge2$ satisfying $\vphi\in H^{0}(X, \ad(E_P) \otimes \Omega_X^{1})$, we have the inequality $\deg\,\sigma^{*}(T_{E_{G}/P})\ge0$ (resp. $\deg\,\sigma^{*}(T_{E_{G}/P})>0$), where $T_{E_{G}/P}$ is the relative tangent bundle for the natural projection $E_{G}/P|_{U}\to U$. We can see that this definition is equivalent to Definition \ref{def:stability of VW G-bundle}. Consider the short exact sequence of $P$-modules (the module structures are given by the adjoint representation)
\[
0 \to \fp \to \fg \to \fg/\fp \to 0,
\]
where $\fg$ and $\fp$ are Lie algebras of $G$ and $P$ respectively. It induces the short exact sequence of vector bundles on $U$
\[
0 \to \sigma^{*}(E_{G}\times_{P}\fp) \to \sigma^{*}(E_{G}\times_{P}\fg) \to \sigma^{*}(E_{G}\times_{P}(\fg/\fp)) \to 0.
\]
This is nothing but
\[
0\to\ad(E_{P})\to\ad(E_{G})|_{U}\to\sigma^{*}(T_{E_{G}/P})\to0.
\]
Hence $\deg\,\sigma^{*}(T_{E_{G}/P})\ge0$ (resp. $\deg\,\sigma^{*}(T_{E_{G}/P})>0$) if and only if $\deg(\ad(E_G|_{U})) \ge \deg(\ad(E_P))$ (resp. $\deg(\ad(E_G|_{U})) > \deg(\ad(E_P))$).
\end{rema}


For a holomorphic vector bundle $\cE$ equipped with a Hermitian metric $h_\cE$, there exists a unique connection $D_\cE$, called the \textbf{Chern connection}, which satisfies two conditions:
\begin{enumerate}
    \item It is compatible with the holomorphic structure: $D_\cE^{0,1} = \bar{\partial}_\cE$. Here, $\bar{\partial}_\cE$ is the Dolbeault operator defining the holomorphic structure of $\cE$.
    \item It preserves the metric $h$: $dh(s, t) = h(D_\cE s, t) + h(s, D_\cE t)$ for sections $s, t \in H^0(X,\cE)$.
\end{enumerate}
We denote the curvature of the Chern connection by $F_\cE = D_\cE^2$.

For an admissible Hermitian metric $h_\fE$ on $\fE := \ad(E_G)$, the quiver vortex equation is given by \cite[Definition 2.1, Theorem 3.1]{ACGP03} by considering the quiver with a single vertex $a$ and a single arrow $v$ decorated by the vector bundle $M_a = L\dual$:
\begin{equation}\label{eq:VW}
i \Lambda_\omega F_\fE^{1,1} + [\vphi, \vphi^{*_h}] = \tau \id_{\fE}
\end{equation}

where $\varphi^{*_h}$ is the adjoint of $\varphi$ (considered as an endomorphism $\fE \stackrel{[-,\vphi]}{\lra} \fE \otimes L$) with respect to the hermitian metric $h_\fE$ on $\fE = \ad(E_G)$ and the hermitian metric on $L$, and $\tau=\tau_\fE \in \RR$. Note that we can consider $\vphi^{*_h}$ as an element of $\Hom(\fE \otimes L, \fE)$.
Then $[\vphi, \vphi^{*_h}] = \tr_L(\vphi \circ \vphi^{*_h}) - \vphi^{*_h} \circ \vphi$ where $\tr_L$ is the trace map $\Hom(E \otimes L, E \otimes L) \to \Hom(E,E)$, so that $[\vphi, \vphi^{*_h}] \in \Hom(\fE, \fE)$.

\begin{rema}
When $L=K_X$, the above quiver vortex equation \eqref{eq:VW} is equivalent to the Yang-Mills-Higgs equation in \cite[Theorem 1.4]{Tan14}.
\end{rema}

\section{Infinitesimal Automorphisms of $L$-valued $G$-Higgs bundles} \label{sect:infaut}

In this section, we generalize the computation of infinitesimal automorphisms of $\Omega_X^{1}$-valued $G$-Higgs bundles in \cite[Theorem 3.1]{Bis09} to the case of $L$-valued $G$-Higgs bundles. 
For that, we establish an analogue of \cite[Lemma 3.4]{BS09}. 
For a $G$-Higgs bundle $\cE=(E_G,\vphi)$, we let $\cA(\cE)$ be the space of infinitesimal automorphism of $\cE$. Same as in \cite{Bis09}, we have a natural isomorphism $\cA(\cE) \cong \set{ \tau \in H^0(X, \ad(E_G)) \, | \, [\tau,\vphi]=0}$.

In this section, any section $\tau \in H^0(X,\ad(E_G))$ is sometimes considered as its induced endomorphism $\ad(E_G) \stackrel{[-,\tau]}{\lra} \ad(E_G)$, by an abuse of notation.

\begin{lemm} \label{lem:parallel}
Let $(E_G, \varphi)$ be a stable $L$-valued $G$-Higgs bundle on $X$. Let $\fE=\ad(E_G)$ and let $h_\fE$ be the unique admissible Hermitian Yang-Mills-Higgs metric satisfying \eqref{eq:VW}. Let $\cF := \cE nd(\fE)$ and let $s \in H^0(X, \cF)$ be a holomorphic 
endomorphism of $\fE$ such that $[s, \varphi] = 0$. (Here, $\vphi$ is considered as its induced endomorphism.) Then $s$ is parallel with respect to the Chern connection $D_\cF$.
\end{lemm}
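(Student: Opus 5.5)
The plan is to use the standard Bochner-type argument (as in \cite[Lemma 3.4]{BS09} and the Kobayashi–Hitchin vanishing theorem): apply the Weitzenböck identity for the Chern connection on $\cF=\cE nd(\fE)$, with the metric induced from $h_\fE$, and feed in the vortex equation \eqref{eq:VW}.

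First I record the induced equation on $\cF$. The metric $h_\cF$ induced by $h_\fE$ has curvature $F_\cF = [F_\fE,-]$. The Higgs field $\vphi$ acts on $\cF$ by $\Phi := [-,\vphi]$, a map $\cF\to\cF\otimes L$, and its adjoint is $\Phi^{*} = [-,\vphi^{*_h}]$. Taking commutators in \eqref{eq:VW}, the scalar term $\tau\id_\fE$ drops out. This gives, for any endomorphism $s$,
\[
i\Lambda_\omega [F_\fE, s] + \bigl[[\vphi,\vphi^{*_h}], s\bigr] = 0 .
\]

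Next comes the pointwise Weitzenböck/Kähler identity. For a holomorphic section $s$ of $\cF$, $\bar\partial s=0$, so
\[
\Delta |s|^2 \text{ (up to sign and normalization) } = |D_\cF s|^2 - \bigl\langle i\Lambda_\omega F_\cF(s), s\bigr\rangle .
\]
More precisely, $i\Lambda_\omega\bar\partial\partial |s|^2 = -|\partial_\cF s|^2 + \langle i\Lambda_\omega F_\cF s, s\rangle$, with the sign fixed by the usual convention. By the identity from the first paragraph, $\langle i\Lambda_\omega F_\cF s,s\rangle = -\langle [[\vphi,\vphi^{*_h}],s],s\rangle$. I then expand with the Jacobi identity and the hypothesis $[s,\vphi]=0$:
\[
\bigl\langle [[\vphi,\vphi^{*_h}],s],s\bigr\rangle = \bigl\langle [\vphi,[\vphi^{*_h},s]],s\bigr\rangle = \pm\,\bigl|[\vphi^{*_h},s]\bigr|^2 ,
\]
where the last equality uses adjointness of $[\vphi,-]$ and $[\vphi^{*_h},-]$ with respect to $h_\cF$ and the metric on $L$. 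Combining the two, I expect $i\Lambda_\omega\bar\partial\partial|s|^2 = -|\partial_\cF s|^2 - |[\vphi^{*_h},s]|^2 \le 0$, up to an overall sign convention. This is the main obstacle. The sign bookkeeping must come out so that both terms have the same sign, and the trace-over-$L$ convention in $[\vphi,\vphi^{*_h}]=\tr_L(\vphi\circ\vphi^{*_h})-\vphi^{*_h}\circ\vphi$ must be handled carefully when $L$ has rank $>1$. I would check it by working in a local orthonormal frame of $L$, writing $\vphi=\sum_j \vphi_j\otimes e_j$, and reducing to the rank-one computation term by term.

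Finally, $X$ is compact, so integrating the resulting identity against $\omega^n/n!$ kills the left side: $\int i\Lambda_\omega\bar\partial\partial f\,\omega^n = 0$ by Stokes. Hence $\partial_\cF s=0$, and also $[\vphi^{*_h},s]=0$. Together with $\bar\partial s=0$, this gives $D_\cF s = \partial_\cF s+\bar\partial_\cF s = 0$, so $s$ is parallel. Alternatively, the maximum principle applied to the subharmonic function $|s|^2$ gives the same conclusion.
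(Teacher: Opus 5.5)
Your proposal is correct and follows essentially the same route as the paper: the Bochner identity for the holomorphic section $s$ of $\cF$, the formula $F_\cF s=[F_\fE,s]$ (Lemma \ref{lemm:endconn}), substituting \eqref{eq:VW} so that $\tau\id_\fE$ drops out of the commutator, the Jacobi identity combined with $[s,\vphi]=0$, the adjointness identity $\langle [A,B],C\rangle=\langle B,[A^*,C]\rangle$ (Lemma \ref{lemm:jacobi1}), and then the maximum principle. The sign you flag comes out as you expect, because that identity with $A=\vphi$, $B=[\vphi^{*_h},s]$, $C=s$ gives exactly $+|[\vphi^{*_h},s]|^2$; your Stokes-integration ending and your frame-by-frame treatment of $L$ of higher rank (which the paper glosses over) are harmless refinements.
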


\begin{proof}
Consider the Laplacian $\Delta_d = d d^* + d^* d$ where $(-)^*$ denotes the adjoint compatible with the Hermitian metric. Consider the holomorphic normal local coordinates $\set{z_k, \bar{z}_k}$. Then the K\"ahler form is locally expressed by $\frac{i}{2}\sum_{k} dz_k \wedge d\bar{z}_k $. Then locally the Laplacian is defined by $\sum_k \rou_{z_k} \rou_{\bar{z}_k}$. Since the section $s$ is holomorphic, we have $D_{\bar{z}_k}s =0$ (Here $D = D_\cF$). Let $\bracket{-,-}$ be the Hermitian inner product via the Hermitian metric $h_\fE$. Let us abbreviate $\rou_{z_k},\rou_{\bar{z}_k},D_{z_k}, D_{\bar{z}_k}$ by $\rou_k, \rou_{\bar{k}}, D_k, D_{\bar{k}}$. Then locally we have:
\begin{align*}
\Delta_d |s|^2 & = \sum_k \rou_k \rou_{\bar{k}} \bracket{s,s} = \sum_k \rou_k ( \bracket{D_{\bar{k}}s, s} + \bracket{s, D_k s} ) = \sum_k \rou_k \bracket{s,D_k s} \\
& = \sum_k \bracket{D_k s, D_k s} + \bracket{s, D_{\bar{k}} D_k s}  = \sum_{k} |D_k s|^2 + \sum_k \bracket{s, D_{\bar{k}} D_k s } \, .
\end{align*}
Since $Ds = \sum_k (D_k s) dz_k$ and $\bracket{dz_i, dz_j} = \delta_{ij}$ (since $\set{z_k, \bar{z}_k}_k$ is the holomorphic local coordinate), we have $\sum_k|D_k s|^2 = \frac{1}{2}|Ds|^2$ since $|dz_i|^2 = 2$ for all $i$. 

On the other hand, locally we can write $F_\cF^{1,1} = \sum_k F_{k \bar{k}} dz_k \wedge d\bar{z}_k$ where $F_{k, \bar{k}} = [D_k,D_{\bar{k}}]$. Since $s$ is holomorphic, we have $D_{\bar{k}} D_k s = - F_{k, \bar{k}}s$. Thus $\Delta_d |s|^2 = |Ds|^2 - \bracket{s, (\sum_k F_{k,\bar{k}} ) s}$.

Moreover, from $F_\cF^{1,1} = \sum_k F_{k \bar{k}} dz_k \wedge d\bar{z}_k$, we have $i\Lambda_\omega F_\cF^{1,1} = 2 \sum_k F_{k, \bar{k}}$. Hence we have
\begin{align*}
2 \Delta_d |s|^2 = |D_\cF s|^2 - \bracket{i\Lambda_\omega F_\cF^{1,1} s, s}.
\end{align*}
Then by Lemma \ref{lemm:endconn} below, we obtain the following formula for a holomorphic section $s$: 
\begin{equation} \label{eq:bochner}
2\Delta_d(|s|^2) = |D_\cF s|^2 - \langle [i\Lambda_{\omega} F^{1,1}_\fE, s], s \rangle.
\end{equation}
We must evaluate the curvature term. Using the Yang-Mills-Higgs equation \eqref{eq:VW}, the curvature term in \eqref{eq:bochner} becomes:
\[
-\langle [i\Lambda_{\omega} F^{1,1}_\fE, s], s \rangle = \langle [ [\varphi, \varphi^{*}], s], s \rangle.
\]
Using the Jacobi identity and the hypothesis $[s, \varphi] = 0$, we have $[[\varphi, \varphi^{*}], s] = [\varphi, [\varphi^{*}, s]]$. Therefore we have:
\begin{equation}\label{eq:adj_target}
\langle [ [\varphi, \varphi^{*}], s], s \rangle = \langle  [\varphi, [\vphi^*,s] ], s \rangle.
\end{equation}

By Lemma \ref{lemm:jacobi1} below, we have
\begin{align}\label{eq:norm}
\langle  [\varphi, [\vphi^*,s] ], s \rangle = \bracket{ [\vphi^*,s], [\vphi^*,s] } = | [\vphi^*,s] |^2.
\end{align}
Thus, the Bochner formula \eqref{eq:bochner} becomes:
\[
2 \Delta_d(|s|^2) = |D_\cF s|^2 + |[\varphi^{*}, s]|^2 \geq 0.
\]
Thus $|s|^2$ is a subharmonic function on the compact manifold $X$. By the maximum principle, $|s|^2$ is constant. Hence $\Delta_d(|s|^2) = 0$, which implies $|D_\cF s|^2 = 0$, so that $D_\cF s = 0$.
\end{proof}

\begin{lemm}\label{lemm:endconn}
For any $s \in H^0(X,\cF)$, we have
$F_\cF s = [F_\fE,s]$.
\end{lemm}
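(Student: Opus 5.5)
We will verify the identity $F_\cF s = [F_\fE, s]$ as a direct consequence of how the Chern connection of $\cF = \cE nd(\fE) \cong \fE \otimes \fE\dual$ is built from that of $\fE$. First, with the Hermitian metric on $\cF$ induced by $h_\fE$, we show that the Chern connection is
\[
D_\cF s = D_\fE \circ s - s \circ D_\fE ,
\]
meaning $(D_\cF s)(e) = D_\fE(s(e)) - s(D_\fE e)$ for local sections $e$ of $\fE$. This operator is a connection on $\cF$ by the Leibniz rule. Its $(0,1)$-part is $\bar{\rou}_\fE \circ s - s\circ \bar{\rou}_\fE = \bar{\rou}_\cF s$, because the holomorphic structure on $\cE nd(\fE)$ is the induced one. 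It is metric for $\bracket{s,t} = \tr(s t^{*})$, since $D_\fE$ is $h_\fE$-unitary. By the uniqueness of the Chern connection, the two connections coincide.

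Next, we extend $D_\cF$ to $\cF$-valued forms using the same commutator formula with the graded sign. For a $\cF$-valued $p$-form $\sigma$ this reads
\[
D_\cF \sigma = D_\fE\circ \sigma - (-1)^p \sigma \circ D_\fE .
\]
Applying it twice to the $0$-form $s$ gives
\[
D_\cF^2 s = D_\fE(D_\fE s - s D_\fE) + (D_\fE s - sD_\fE) D_\fE .
\]
The mixed terms $D_\fE s D_\fE$ cancel. What remains is $D_\fE^2 s - s D_\fE^2 = F_\fE s - sF_\fE$. This is the graded commutator $[F_\fE, s]$, and since $F_\fE$ is a $2$-form no sign appears. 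Taking $(1,1)$-parts gives $F^{1,1}_\cF s = [F^{1,1}_\fE, s]$. Contracting with $i\Lambda_\omega$, which acts only on the form part, then gives exactly the version used in \eqref{eq:bochner}.

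Holomorphicity of $s$ is not needed for this identity. The only point requiring care is the identification of $D_\cF$ with the induced connection, that is, checking compatibility with both the holomorphic structure and the metric. Once that is settled, the remaining step is the routine sign bookkeeping in the curvature computation.
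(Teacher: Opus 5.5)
Your proposal is correct and is essentially the paper's argument. The paper starts from the Leibniz relation $D_\fE(s(v)) = (D_\cF s)(v) + s(D_\fE v)$ for a local section $v$ and applies $D_\fE$ once more with the graded Leibniz rule, so that the mixed terms $D_\cF(s)\wedge D_\fE(v)$ cancel. That is your graded-commutator computation written out on $v$. Your check that the Chern connection of $\cF$ with the induced metric coincides with the induced connection is a step the paper uses implicitly, and you are right that holomorphicity of $s$ plays no role.
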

\begin{proof}
Let $v$ be a local section of $\fE$. Then
$$D_{\fE}(s(v))=D_{\cF}(s)(v)+s(D_{\fE}(v)).$$
Applying $D_{\fE}$ on both sides,
$$F_{\fE}(s(v))=D_{\fE}(D_{\cF}(s)(v))+D_{\fE}(s(D_{\fE}(v))).$$
By the (generalized) Leibniz rule, we see that
$$D_{\fE}(D_{\cF}(s)(v))=D_{\cF}(D_{\cF}(s))(v)-D_{\cF}(s)\wedge D_{\fE}(v)$$
and
$$D_{\fE}(s(D_{\fE}(v)))=D_{\cF}(s)\wedge D_{\fE}(v)+s(D_{\fE}(D_{\fE}(v))).$$
Thus
$$F_{\fE}(s(v))=D_{\cF}(D_{\cF}(s))(v)+s(D_{\fE}(D_{\fE}(v)))=F_{\cF}(s)(v)+sF_{\fE}(v),$$
that is,
$$F_{\cF}(s)(v)=[F_{\fE},s](v).$$
\end{proof}

\begin{lemm}\label{lemm:jacobi1}
Let $E$ be a vector bundle over $X$ with metric and let $F = \cE nd(E)$ be the endomorphism bundle equipped with the metric induced from $E$. For a point $x \in X$ and $A,B,C \in \End(E|_x)$, we have
\[
\bracket{[A,B],C}_x = \bracket{B,[A^*,C]}_x.
\] 
\end{lemm}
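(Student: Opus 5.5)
This is a pointwise linear-algebra identity on the finite-dimensional Hermitian vector space $E|_x$, so I would fix the point $x$ and work entirely in $V := E|_x$. The inner product on $\End(V)$ induced from $h$ is the Hilbert--Schmidt product $\bracket{S,T}_x = \tr(S T^*)$. Here $T^*$ is the $h$-adjoint, characterized by $h(Tu,w) = h(u,T^*w)$. The identity then follows from two facts: the trace is cyclic, and taking adjoints reverses products. I would first make this description explicit. Choosing an $h$-orthonormal basis of $V$, the induced metric on $\cE nd(E) \cong E \otimes E\dual$ is $\sum_{i,j} S_{ij}\overline{T_{ij}} = \tr(ST^*)$, and $T^*$ is the conjugate transpose. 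Some normalization convention (for example a scalar factor) would not affect the identity, since both sides scale the same way.

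Next I would expand the left-hand side:
\[
\bracket{[A,B],C}_x = \tr(ABC^*) - \tr(BAC^*).
\]
For the right-hand side, I use $[A^*,C]^* = (A^*C - CA^*)^* = C^*A - AC^*$. This gives
\[
\bracket{B,[A^*,C]}_x = \tr\bigl(B(C^*A - AC^*)\bigr) = \tr(BC^*A) - \tr(BAC^*).
\]
By cyclicity, $\tr(BC^*A) = \tr(ABC^*)$, so the two expressions agree. This completes the argument.

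There is no real obstacle here. The only point needing care is to pin down which Hermitian product is meant: linear in the first slot, with the adjoint taken with respect to $h_x$. One should also confirm that this is the metric on $\cF = \cE nd(\fE)$ used in Lemma~\ref{lem:parallel}. That compatibility matters because in \eqref{eq:norm} the lemma is applied with $A = \vphi$, $B = [\vphi^*,s]$, $C = s$, after contracting the $L$-factor using the metric on $L$. I would remark that the same computation goes through verbatim when $A$ is $L$-valued: one uses an orthonormal frame of $L_x$, applies the identity componentwise, and sums. This is the form in which the lemma is invoked in the proof of Lemma~\ref{lem:parallel}.
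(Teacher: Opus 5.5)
Your proposal is correct and is essentially the paper's proof. Both write $\bracket{\alpha,\beta}_x=\tr(\alpha\beta^*)$, expand the commutators, rewrite $\tr(BC^*A)-\tr(BAC^*)$ as $\tr(B[A^*,C]^*)$, and use cyclicity of the trace. Your added remark, that the $L$-valued case follows componentwise in an orthonormal frame of $L_x$, is correct and matches how the lemma is actually used in Lemma~\ref{lem:parallel}.
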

\begin{proof}
Note that $\bracket{\alpha,\beta}_x = \tr(\alpha \beta^*)$. Therefore
\begin{align*}
\bracket{[A,B],C}_x & = \bracket{AB-BA,C}_x = \tr(ABC^*) - \tr(BAC^*) = \tr(BC^* A) - \tr(BAC^*) \\
& = \tr(B (A^* C)^* ) - \tr(B(CA^*)^*) = \tr(B [A^*,C]^*) = \bracket{B, [A^*,C]}_x \, . 
\end{align*}
\end{proof}

Now, we are ready to prove the following theorem about infinitesimal automorphism of stable $L$-valued principal $G$-Higgs bundles.

\begin{theo} \label{thm:automorphisms}
Let $\cE:=(E_G,\vphi)$ be a \textbf{stable} $L$-valued principal $G$-Higgs bundle on a compact K\"ahler manifold $X$. Then the infinitesimal automorphism group is isomorphic to the center of the Lie algebra:
\[
\mathcal{A}(\mathcal{E}) = \mathfrak{z}(\mathfrak{g}).
\]
\end{theo}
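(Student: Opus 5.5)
We follow the strategy of \cite[Theorem 3.1]{Bis09}, with Lemma \ref{lem:parallel} replacing the analytic input of \cite{BS09}. Start from the identification $\cA(\cE) \cong \set{\tau \in H^0(X,\ad(E_G)) \,|\, [\tau,\vphi]=0}$. Constant sections of the center are automatically included: $\ad(E_G)$ contains the trivial subbundle $E_G\times_G \fz(\fg) = X\times \fz(\fg)$, because $G$ acts trivially on $\fz(\fg)$ by the adjoint action. Any $z\in\fz(\fg)$ therefore gives a global section that commutes with $\vphi$, so $\fz(\fg)\subset \cA(\cE)$. Everything then rests on the reverse inclusion.

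Let $\tau \in \cA(\cE)$ and set $s=\ad(\tau)=[-,\tau]\in H^0(X,\cF)$. Jacobi together with $[\tau,\vphi]=0$ gives $[s,\ad\vphi]=0$, so Lemma \ref{lem:parallel} shows that $s$ is parallel for $D_\cF$. To pass from $s$ back to $\tau$, we want the Yang--Mills--Higgs metric $h_\fE$ to be induced by a reduction $E_K$ of $E_G$ to a maximal compact $K$ (the admissibility condition in \cite{ACGP03}, as in \cite{AB01, BS09}). With such a metric the Chern connection of $\fE$ is induced from a connection on $E_G$, and the map $\ad:\ad(E_G)\to\cF$ is parallel. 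Its restriction to the semisimple part $[\fg,\fg]$ is injective, so the $[\fg,\fg]$-component $\tau'$ of $\tau$ is parallel. The $\fz(\fg)$-component of $\tau$ is a holomorphic section of a trivial bundle, hence constant.

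The main step is to show $\tau'=0$. Since $\tau'$ is parallel, the eigenvalues of $\ad(\tau')$ are constant and the generalized eigenspace decompositions are holomorphic. We then split into two cases.
\begin{itemize}
\item If $\tau'$ has nonzero semisimple part, choose a real element $\beta$ of the form $\mathrm{Re}(\lambda\tau'_{ss})$ for a suitable $\lambda$, which we may arrange to be nonzero. Its nonnegative eigenspaces define a reduction $E_P$ to a proper parabolic $P$ on all of $X$, and $P$ can be enlarged to a maximal parabolic. Because $\vphi$ commutes with $\tau'$, it preserves the eigenspace decomposition, so $\vphi\in H^0(\ad(E_P)\otimes L)$.
\item If $\tau'$ is nilpotent and nonzero, apply Jacobson--Morozov (or the canonical parabolic attached to a nilpotent element) to obtain a parabolic reduction, again preserved by $\vphi$.
\end{itemize}
In either case stability requires $\deg \ad(E_P)<\deg\ad(E_G)$. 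Parallelism, however, makes $\ad(E_G)=\ad(E_P)\oplus(\text{complement})$ an orthogonal parallel splitting, so both pieces have degree zero: flat in the semisimple case, or degree computed via $\Lambda_\omega F$ and the vortex equation. This contradicts strict stability.

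The delicate point is this degree computation. In the semisimple case the complement is a sum of eigenbundles of a parallel skew-Hermitian or Hermitian element, and its degree is computed by Chern--Weil from $i\Lambda_\omega F^{1,1}$ restricted to the summand. We expect the trace of $[\vphi,\vphi^*]$ on $\vphi$-invariant parallel summands to make the degree vanish, or at least to give the wrong sign, using $\tr[\vphi,\vphi^*]|_{W}=|\vphi^*|_{W^\perp}|^2\ge 0$-type arguments. The nilpotent case is more naturally handled by noting that $|\tau'|$ constant and parallel forces $\tau'$ to be normal in the $K$-metric, hence semisimple. This eliminates the nilpotent case and leaves only the first.
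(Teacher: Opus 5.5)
Your outline matches the paper's (and \cite{Bis09}'s): a parallel infinitesimal automorphism gives a parabolic reduction preserved by the connection and by $\vphi$, and the vortex equation forces its degree to vanish, contradicting stability. However, two load-bearing steps do not go through as written.

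\emph{The nilpotent case.} Your elimination of it is false. Parallelism only says that the holonomy fixes $\tau'(x_0)$. For a connection with trivial holonomy (the trivial flat bundle $X\times\fg$), every constant nilpotent element is parallel, has constant norm, and is not normal.

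The missing input is the second term of the Bochner identity in Lemma \ref{lem:parallel}. Since $2\Delta_d|s|^2=|D_\cF s|^2+|[\vphi^{*_h},s]|^2$ and $|s|^2$ is constant, you also get $[\vphi^{*_h},s]=0$, i.e.\ $[s^{*},\vphi]=0$. Moreover $s^{*}$ is again parallel, hence holomorphic. If $h_\fE$ comes from a compact real form with conjugation $\sigma$, then $s^{*}$ is (up to sign) the adjoint action of $\sigma\tau'$. One of $\tau'-\sigma\tau'$, $i(\tau'+\sigma\tau')$ is then nonzero with Hermitian adjoint action, i.e.\ a non-central semisimple parallel infinitesimal automorphism, which returns you to your first case.

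That compatibility of $h_\fE$ with a compact real form is itself unjustified. \cite[Theorem 3.1]{ACGP03} only gives a metric on the vector bundle $\fE$. The paper proves merely that its Chern connection is induced from a connection on $E_G$ (Proposition \ref{prop:indconn}, Corollary \ref{coro:indchern}, via Bochner for the bracket tensor $B$ and for sections of $\fZ$). A $K$-reduction would need a Hitchin--Kobayashi correspondence for principal $G$-Higgs bundles. The paper avoids the case split entirely: the holonomy fixes $\xi(x_0)$, whose centralizer lies in a maximal parabolic by \cite[Proposition 2.1]{Ram75}, and parallel transport gives $E_P$ (Remark \ref{rema:holred}).

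\emph{The degree step.} This is where the contradiction comes from, yet you leave it as an expectation, and your fallback points the wrong way. For a parallel $\vphi$-invariant $W$ with orthogonal projection $\pi$, using $\tau=0$ you get $\deg(W)=-\frac{1}{\Vol(X)}\int_X|\pi\vphi(1-\pi)|^2\frac{\omega^n}{n!}\le 0$. That is exactly what stability allows. The summands are also not flat: only $i\Lambda_\omega F$ is controlled, and it equals the compression of $-[\vphi,\vphi^{*_h}]$.

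The contradiction needs $\vphi$ to preserve a parallel complement of $W$ as well. In your first case you have this but did not use it. $\ad\beta$ is a polynomial in $s$ with constant coefficients, so it is parallel by Lemma \ref{lem:parallel} alone. Hence $W=\fE_{\ge0}(\beta)$ and $W'=\fE_{<0}(\beta)$ are parallel and $\vphi$-invariant. For the parallel projection $\pi'$ onto $W$ along $W'$, cyclicity of the trace together with $\vphi\pi'=\pi'\vphi$ gives $\tr(\pi'[\vphi,\vphi^{*_h}])=0$, so $\deg(W)=0$. This is precisely what the paper's appeal to $\tr[A,B]=0$ silently requires after restricting \eqref{eq:VW} to $\fE_P$. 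For a Jacobson--Morozov parabolic it need not hold, since $\vphi$ then lies only in the centralizer of a nilpotent element, not in a Levi subalgebra.

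Finally, enlarging $P(\beta)$ to a maximal $Q$ afterwards breaks the argument, since $\deg(\ad(E_P))=0$ says nothing about $\deg(\ad(E_Q))$. Instead, replace $\beta$ by an element $\beta'$ of the center of the Levi subalgebra $\mathfrak{l}$ centralizing $\beta$, chosen so that $P(\beta')$ is maximal. This $\beta'$ is again parallel and commutes with $\vphi$, because the connection reduces to the Levi subgroup and $\vphi$ takes values in $\mathfrak{l}$.
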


\begin{proof}
Again let $\fE=\ad(E_G)$. Since $\mathcal{E}$ is stable, Theorem \ref{thm:stabcompare} in the next section (an analogue of \cite[Theorem 2.6]{AB01}) guarantees that $(\fE, \vphi)$ is polystable. Then, \cite[Theorem 3.1]{ACGP03} guarantees the existence of an admissible Hermitian metric $h$ on $\fE$ satisfying the Yang-Mills-Higgs equation \eqref{eq:VW}. Then this metric induces a Chern connection $D_\fE$ on the adjoint bundle $\fE$. 

By definition, $\mathcal{A}(\mathcal{E})$ consists of holomorphic sections $\xi \in H^0(X, \ad(E_G))$ satisfying $[\varphi, \xi] = 0$. Let $\xi \in \mathcal{A}(\mathcal{E})$. Let $\cE = \ad(E_G)$. Since $[\varphi, \xi] = 0$, the commutator relation for the induced endomorphisms $[\vphi,\xi]=0$ also holds. Thus Lemma \ref{lem:parallel} applies, and hence we have $D_\cF \xi = 0$ where $\xi$ is considered as its induced endomorphism. This directly implies $D_\fE \xi = 0$. In other words $\xi$ is parallel with respect to $D_\fE$.

Assume that $\xi \notin \fz(\fg)$. By Corollary \ref{coro:indchern}, there exists a connection $D_E$ on $E_G$ compatible with the holomorphic structure which induces the connection $D_\fE$ on the adjoint bundle $\fE$ of $E_G$. 
Then, same as the proof of \cite[Theorem 3.1]{Bis09}, we can find a parabolic subgroup $P \subset G$ and its holomorphic reduction, $E_P \subset E_G$, 
which is preserved by the connection $D_E$ on $E_G$.
Let us denote the adjoint bundle $\ad(E_P) := E_P \times_P \fp$ by $\fE_P$. Note that $\fE_P$ becomes a subbundle of $\fE$. 
Again, parallel to \cite[Theorem 3.1]{Bis09}, we can check that the Higgs field $\vphi \in H^0(X,\fE \otimes L)$ restricts to $\vphi_P := \vphi|_{\fE_P} \in H^0(X,\fE_P \otimes L)$. (See Remark \ref{rema:holred} to see how we can construct the holomorphic reduction.)

We now derive a contradiction by using the slope stability. Let us recall the Yang-Mills-Higgs equation for $D_\fE$:
\begin{align*} 
i \Lambda_\omega F_\fE^{1,1} + [\vphi, \vphi^{*_h}] = \tau \id_{\fE}.
\end{align*}
The degree of the bundle $\fE = \ad(E_G)$ is given by the integral of the trace of the (1,1)-component of its curvature:
\begin{align}\label{eq:curvdeg}
\deg(\fE) = \frac{1}{\Vol(X)}\int_X \tr(i\Lambda_\omega F^{1,1}_{\fE}) \frac{\omega^{n}}{n!}.
\end{align}
On the other hand, we observe that $\frac{1}{\Vol(X)}\int_X \tr\left( [\varphi, \varphi^{*_h}] \right) \frac{\omega^{n}}{n!} = 0$. Moreover, since $G$ is reductive, $\fE \cong \fE\dual$. Therefore, $\deg(\fE) = 0$, so that we obtain $\frac{1}{\mathrm{Vol}(X)}\int_X \rank(\fE) \frac{\tau \omega^{n}}{n!} = \rank(\fE) \tau = 0$ from \eqref{eq:VW}. Thus $\tau=0$.

Since the connection $D_E$ restricts to a connection $D_P$ on $E_P$, the connection $D_\fE$ also restricts to a connection $D_{\fE_P}$ on $\fE_P$. Thus, the equation \eqref{eq:VW} restricts over $\fE_P$:
\[
i \Lambda_\omega F_{D_{\fE_P}}^{1,1} + [\vphi, \vphi^{*_h}] = 0.
\]
Here we used $\tau = 0$. Therefore we have:
\[
\deg(\fE_P) = \frac{1}{\Vol(X)}\int_X \tr(i\Lambda_\omega F^{1,1}_{D_{\fE_P}}) \frac{\omega^{n}}{n!} = \frac{-1}{\Vol(X)} \int_X \tr( [\varphi, \varphi^{*_h}]) \frac{\omega^{n}}{n!} = 0
\]
where the last equality follows since $\tr([A,B])=0$. 
But since $\cE = (E_G,\vphi)$ is stable, we must have
\[
0 = \deg(\fE) > \deg(\fE_P).
\]
But in the above we proved that $\deg(\fE_P) = 0$, which leads to a contradiction. Therefore, the assumption that $\xi \notin \mathfrak{z}(\mathfrak{g})$ is false, so that we have $\mathcal{A}(\mathcal{E}) \subset \mathfrak{z}(\mathfrak{g})$. On the other hand, it is clear that $\fz(\fg) \subset \cA(\cE)$, so that we obtain $\cA(\cE) = \fz(\fg)$. 
\end{proof}

\begin{rema} \label{rema:holred}
We briefly review how to construct the holomorphic reduction $E_P \subset E_G$ in the same way as in \cite{Bis09}. Let $\Ad(E_G) := E_G \times _G G$ via the conjugate action of $G$ on $G$ itself. Let $x_0$ be a fixed point in $X$. Note that $\Ad(E_G)_{x_0}$ is isomorphic to the automorphism group of $(E_G)_{x_0}$ which commutes with the action of $G$ on $(E_G)_{x_0}$. Let $H$ be the subgroup of $\Ad(E_G)_{x_0}$ consisting of automorphisms that come from the holonomy of the connection $D_E$ on $E_G$.

Note that there is an isomorphism $\vphi: \Ad(E_G)_{x_0} \cong G$. Although these isomorphisms are not unique, the conjugacy class of $\vphi(P_0) \subset G$ is well defined. Let us fix a parabolic subgroup $P$ in this conjugacy class. 

Note that $\tau$ is parallel with respect to the connection $D_\fE$. Then $H$ fixes $\tau(x_0) \in \fE_{x_0}$. ($\Ad(E_G)_{x_0}$ acts on $\fE_{x_0}$.) Since $\tau \notin \fz(\fg)$, the centralizer $Z(\tau(x_0)) \subset \Ad(E_G)_{x_0}$ of $\tau(x_0)$ contains $H$ and is contained in a maximal parabolic subgroup $P_0$ by \cite[Proposition 2.1]{Ram75}. Let us define $Y_{x_0} \subset (E_G)_{x_0}$ by
\[
Y_{x_0} := \set{y \in (E_G)_{x_0}\, | \, p(y,P) = P_0}, \quad p: E_G \times G \to \Ad(E_G) \textrm{ is the projection.}
\]
Then we define $Y \subset E_G$ to be the submanifold consisting of $y \in E_G$ such that for a path $\gamma$ connecting $\pi(y)$ and $x_0$, where $\pi : E_G \to X$ is the projection, the parallel transport of $y$ along $\gamma$ via the connection $D_E$ lies in $Y_{x_0}$.

Note that since the holonomy group $H$ is contained in $P_0$, the above definition of $Y$ does not depend on the choice of the path $\gamma$, so that $Y$ is well-defined. Then we can show that $Y$ is in fact a holomorphic reduction of $E_G$ for the parabolic subgroup $P \subset G$.
\end{rema}

\section{Stability of $(E_G,\vphi)$ and $(\ad(E_G),\vphi)$} \label{sect:stability2}

In this section, we will show that if an $L$-valued $G$-Higgs bundle $(E_G,\vphi)$ is stable, then the adjoint $L$-valued Higgs bundle $(\ad(E_G),\vphi)$ (Here $\vphi$ stands for the morphism $[\vphi,-]: \ad(E_G) \to \ad(E_G)\otimes L$ by an abuse of notation) is polystable.

\begin{lemm}\label{tensor product of ss Higgs bundles is ss}
If $L$-valued Higgs bundles $(E_{1},\vphi_{E_{1}})$ and $(E_{2},\vphi_{E_{2}})$ are semistable, then $(E_{1}\otimes E_{2},\vphi_{E_{1}\otimes E_{2}})$ is also semistable, where $\vphi_{E_{1}\otimes E_{2}}=\id_{E_1}\otimes \vphi_{E_2} + \vphi_{E_1}\otimes \id_{E_2}$. 
\end{lemm}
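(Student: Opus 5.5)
The plan is to prove the lemma analytically, through the Kobayashi–Hitchin correspondence of \cite[Theorem 3.1]{ACGP03} for the one-vertex, one-arrow quiver twisted by $L\dual$, in the spirit of the standard argument that tensor products of semistable bundles are semistable. On a compact Kähler manifold, semistability of an $L$-valued Higgs bundle $(E,\vphi)$ is equivalent to the existence of \emph{approximate} Hermitian–Einstein–Higgs metrics. These are metrics $h_\epsilon$ with
\[
\sup_X \bigl| i\Lambda_\omega F^{1,1}_{h_\epsilon} + [\vphi,\vphi^{*_{h_\epsilon}}] - \tau \id_E \bigr| < \epsilon
\]
for every $\epsilon>0$. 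I would take this equivalence as the working characterization. It is the vortex-equation analogue of the Bruzzo–Graña Otero / Jacob results for Higgs bundles, and it can be extracted from the heat-flow/continuity method underlying \cite{ACGP03}. If only the polystable statement is available, I would instead reduce to that case: pass to the graded object of a Jordan–Hölder filtration (by saturated $\vphi$-invariant subsheaves), which is polystable, and use that semistability is detected on the graded object on a big open subset.

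In the polystable case the argument goes as follows. Equip $E_1$ and $E_2$ with metrics $h_1,h_2$ solving \eqref{eq:VW} with constants $\tau_1,\tau_2$, and put the product metric $h_1\otimes h_2$ on $E_1\otimes E_2$. The Chern connection of the product metric is $D_1\otimes 1 + 1\otimes D_2$, so
\[
F_{E_1\otimes E_2} = F_{E_1}\otimes\id + \id\otimes F_{E_2}.
\]
For the Higgs field $\vphi_{12}=\vphi_1\otimes\id+\id\otimes\vphi_2$, the adjoint with respect to $h_1\otimes h_2$ is $\vphi_1^*\otimes\id+\id\otimes\vphi_2^*$. Then
\[
[\vphi_{12},\vphi_{12}^*] = [\vphi_1,\vphi_1^*]\otimes\id+\id\otimes[\vphi_2,\vphi_2^*] + (\text{cross terms}).
\]
The cross terms are of the form $(\vphi_1\otimes\id)(\id\otimes\vphi_2^*) - (\id\otimes\vphi_2^*)(\vphi_1\otimes\id)$. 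These cancel because operators acting on different tensor factors commute, once one is careful with the contraction $\tr_L$ over the $L$-factor. Checking this cancellation precisely, with the $L$-valued (rather than scalar) twist and the $\tr_L$ convention of \eqref{eq:VW}, is the step I expect to be the main obstacle. When $\rank L>1$ the two $L$-slots must be contracted consistently, and I would write everything in a local orthonormal frame of $L$ to verify it.

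Granting the cancellation, the product metric satisfies \eqref{eq:VW} with constant $\tau_1+\tau_2$. The same computation applied to approximate solutions gives approximate solutions with error at most $\epsilon_1+\epsilon_2$. The easy direction of the correspondence (a Chern–Weil argument with the second fundamental form of a $\vphi$-invariant subsheaf, where the Higgs term contributes a nonpositive quantity) then gives semistability of $(E_1\otimes E_2,\vphi_{12})$. This last step is standard and works for any $\vphi$-invariant saturated subsheaf, defined on a big open subset.
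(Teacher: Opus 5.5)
\textbf{Gap in the main route.} Your primary route is not yet a proof, because all of its content sits in the claim that a semistable $L$-valued Higgs bundle admits approximate solutions of \eqref{eq:VW}. That is not what \cite{ACGP03} provides: Theorem 3.1 there relates \emph{polystability} to \emph{exact} solutions. Nor can the approximate statement simply be ``extracted'' from that existence proof. Already for ordinary bundles and for $\Omega_X^1$-valued Higgs bundles on a compact K\"ahler, not necessarily projective, manifold, the implication semistable $\Rightarrow$ approximate Hermitian--Einstein metrics is a separate theorem (Jacob; Li--Zhang). Its proof requires showing that the Donaldson functional is bounded below on semistable objects, via the Uhlenbeck--Yau/Bando--Siu analysis of the heat flow. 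Nothing in the paper's references covers the twisted quiver case. Taking it as a ``working characterization'' therefore assumes a result far deeper than the lemma itself.

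You have also put the difficulty in the wrong place. The $\tr_L$ bookkeeping is routine: in a local orthonormal frame $\{e_a\}$ of $L$, write $\vphi_i=\sum_a\vphi_i^a\otimes e_a$. Then $[\vphi,\vphi^{*_h}]=\sum_a[\vphi^a,(\vphi^a)^{*}]$, and for $\vphi_{12}^a=\vphi_1^a\otimes 1+1\otimes\vphi_2^a$ the cross terms cancel summand by summand. So your product-metric computation is correct, but it only re-proves the polystable case, which the paper simply quotes as \cite[Proposition 5.3]{ACGP03b}.

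\textbf{The fallback is the paper's proof.} Your one-sentence fallback is the argument that actually closes the gap, and it is essentially the paper's proof; it just has to be written out. The paper proceeds as follows.
\begin{itemize}
\item It takes a filtration $0=F_0\subset\cdots\subset F_l=E_2$ by saturated $\vphi$-invariant subsheaves (iterated socles), whose graded pieces are polystable of slope $\mu(E_2)$.
\item It tensors this filtration with the locally free $E_1$, so that $E_1\otimes F_{j-1}\subset E_1\otimes F_j$ has quotient $E_1\otimes(F_j/F_{j-1})$.
\item It uses that an extension of semistable Higgs sheaves of equal slope is again semistable.
\end{itemize}
This is done twice: first with $E_1$ polystable, where the graded pieces are polystable by the polystable case; then with $E_1$ only semistable, where the graded pieces are semistable by the first step. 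A single filtration built from Jordan--H\"older filtrations of both factors would serve equally well, working on the big open set where all graded pieces are locally free.

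In either version the polystable case is applied to graded pieces that are only torsion-free. Strictly, this needs the correspondence for polystable sheaves with admissible metrics rather than for bundles. The paper glosses over this point as well, so it is not a defect specific to your proposal.
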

\begin{proof}
If $(E_{1},\vphi_{E_{1}})$ and $(E_{2},\vphi_{E_{2}})$ are polystable, then $(E_{1}\otimes E_{2},\vphi_{E_{1}\otimes E_{2}})$ is also polystable by \cite[Proposition 5.3]{ACGP03b}. \footnote{We cite the arXiv version here since Proposition 5.3 only appear in the arXiv version.}

Assume that $(E_{1},\vphi_{E_{1}})$ is polystable and $(E_{2},\vphi_{E_{2}})$ is only semistable. Let
$$(0,0)=(F_{0},\vphi_{F_{0}})\subset(F_{1},\vphi_{F_{1}})\subset(F_{2},\vphi_{F_{2}})\subset\cdots\subset(F_{l-1},\vphi_{F_{l-1}})\subset(F_{l},\vphi_{F_{l}})=(E_{2},\vphi_{E_{2}})$$
be the filtration of $(E_{2},\vphi_{E_{2}})$, where $(F_{i}/F_{i-1},\vphi_{F_{i}/F_{i-1}})$ is the unique maximal polystable subpair of $(E_{2}/F_{i-1},\vphi_{E_{2}/F_{i-1}})$. Tensoring with $(E_{1},\vphi_{E_{1}})$, we have the filtration
\begin{multline}
(E_{1}\otimes F_{1},\vphi_{E_{1}\otimes F_{1}})\subset(E_{1}\otimes F_{2},\vphi_{E_{1}\otimes F_{2}})\subset\cdots\subset(E_{1}\otimes F_{l-1},\vphi_{E_{1}\otimes F_{l-1}}) \\ 
\subset(E_{1}\otimes F_{l},\vphi_{E_{1}\otimes F_{l}})=(E_{1}\otimes E_{2},\vphi_{E_{1}\otimes E_{2}})
\end{multline}
of $(E_{1}\otimes E_{2},\vphi_{E_{1}\otimes E_{2}})$. Since $(F_{j}/F_{j-1},\vphi_{F_{j}/F_{j-1}})$ is polystable with slope $\mu(E_{2})$ for all $j\in[1,l]$, $(E_{1}\otimes(F_{j}/F_{j-1}),\vphi_{E_{1}\otimes(F_{j}/F_{j-1})})$ is polystable with slope $\mu(E_{1}\otimes E_{2})$ for all $j\in[1,l]$ by the previous observation. Since any extension of a semistable Higgs bundle by a semistable Higgs bundle of the same slope is semistable, $(E_{1}\otimes E_{2},\vphi_{E_{1}\otimes E_{2}})$ is semistable.

Assume that both $(E_{1},\vphi_{E_{1}})$ and $(E_{2},\vphi_{E_{2}})$ are semistable. Since $(F_{j}/F_{j-1},\vphi_{F_{j}/F_{j-1}})$ is polystable, $(E_{1}\otimes(F_{j}/F_{j-1}),\vphi_{E_{1}\otimes(F_{j}/F_{j-1})})$ is semistable for all $j\in[1,l-1]$ by the previous observation. Since any extension of a semistable Higgs bundle by a semistable Higgs bundle of the same slope is semistable, we conclude that $(E_{1}\otimes E_{2},\vphi_{E_{1}\otimes E_{2}})$ is semistable.
\end{proof}

\begin{defi}
For a Higgs bundle $(E,\vphi)$, consider the Harder-Narasimhan filtration:
\[
(0,0) = (E^0, \vphi^0) \subset (E^1,\vphi^1) \subset \cdots \subset (E^\ell, \vphi^\ell) = (E,\vphi)
\]
where $E^i \subset E^{i+1}$ and $\vphi^{i+1}|_{E^i} = \vphi^i$ for all $i$. Then $\mu_{\max}((E,\vphi))$(resp. $\mu_{\min}((E,\vphi))$) is defined by
\[
\mu_{\max}(\text{resp. } \mu_{\min})((E,\vphi)) := \max (\text{resp. } \min)\set{ \mu(E^{i} / E^{i-1}) \, | \, i=1,2,\dots,\ell }.
\]
\end{defi}

\begin{prop}[{cf. \cite[Proposition 2.9]{AB01}}]\label{mumin and mumax}
For two Higgs bundles $(E_{1},\vphi_{E_{1}})$ and $(E_{2},\vphi_{E_{2}})$ on $X$,
$$\mu_{\min}((E_{1}\otimes E_{2},\vphi_{E_{1}\otimes E_{2}}))=\mu_{\min}((E_{1},\vphi_{E_{1}}))+\mu_{\min}((E_{2},\vphi_{E_{2}}))$$
and
$$\mu_{\max}((E_{1}\otimes E_{2},\vphi_{E_{1}\otimes E_{2}}))=\mu_{\max}((E_{1},\vphi_{E_{1}}))+\mu_{\max}((E_{2},\vphi_{E_{2}})).$$
\end{prop}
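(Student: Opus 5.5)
We follow the strategy of \cite[Proposition 2.9]{AB01}, replacing vector bundles by $L$-valued Higgs bundles and semistability of tensor products by Lemma \ref{tensor product of ss Higgs bundles is ss}. We prove the $\mu_{\max}$ identity; the $\mu_{\min}$ identity follows by duality. The dual $(E\dual,-\vphi\dual)$ of a Higgs bundle has Harder--Narasimhan filtration given by the annihilators of the original filtration, with the order reversed and slopes negated. Hence $\mu_{\min}(E,\vphi)=-\mu_{\max}(E\dual,-\vphi\dual)$, and duality commutes with the tensor Higgs field.

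For the inequality $\ge$, let $(E_1^1,\vphi^1)$ and $(E_2^1,\vphi^1)$ be the first (maximal destabilizing) terms of the Harder--Narasimhan filtrations. They are semistable Higgs subsheaves of slopes $\mu_{\max}(E_1)$ and $\mu_{\max}(E_2)$. By Lemma \ref{tensor product of ss Higgs bundles is ss}, $E_1^1\otimes E_2^1$ is a semistable Higgs subsheaf of $E_1\otimes E_2$ of slope $\mu_{\max}(E_1)+\mu_{\max}(E_2)$. Here we use that slopes add under tensor products, and we work on a big open set where everything is locally free. Since $\mu_{\max}$ bounds the slope of every Higgs subsheaf, this gives $\ge$.

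For $\le$, tensor the two Harder--Narasimhan filtrations to obtain a filtration of $E_1\otimes E_2$ by $\vphi$-invariant subsheaves. Its graded pieces are $\bigl(E_1^i/E_1^{i-1}\bigr)\otimes\bigl(E_2^j/E_2^{j-1}\bigr)$, ordered lexicographically or refined by total slope. Each piece is semistable by Lemma \ref{tensor product of ss Higgs bundles is ss}, with slope $\mu_i+\nu_j\le \mu_{\max}(E_1)+\mu_{\max}(E_2)$. Let $S\subset E_1\otimes E_2$ be a $\vphi$-invariant subsheaf, and let $k$ be the smallest index with $S\subset$ the $k$-th filtration step. The composite map $S\to$ (the $k$-th graded piece) is a nonzero Higgs morphism into a semistable Higgs sheaf. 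Inducting over the filtration, the standard argument shows $\mu(S)\le\max_{i,j}(\mu_i+\nu_j)$. Concretely, if every graded piece of a finite filtration by Higgs subsheaves is semistable with slope $\le c$, then every Higgs subsheaf has slope $\le c$, because slopes of subsheaves of an extension are controlled by those of the sub and the quotient. This yields $\mu_{\max}(E_1\otimes E_2)\le\mu_{\max}(E_1)+\mu_{\max}(E_2)$.

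The main obstacle is the technical input, not the bookkeeping. Lemma \ref{tensor product of ss Higgs bundles is ss} is stated for bundles, but Harder--Narasimhan graded pieces are only torsion-free sheaves. We handle this by restricting to a big open subset $U$ where all graded pieces are locally free; degrees are unchanged there since $\codim(X\setminus U)\ge2$. The polystability input \cite[Proposition 5.3]{ACGP03b} underlying that lemma must also remain valid over $U$, for instance via admissible metrics, which \cite{ACGP03} allows. We expect to spend the most care on this reduction to the locally free case.
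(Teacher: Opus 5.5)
Your proposal is correct and follows essentially the paper's route. The paper simply asserts, as an immediate consequence of Lemma \ref{tensor product of ss Higgs bundles is ss}, that $E_1^1\otimes E_2^1$ is the maximal semistable Higgs subsheaf of $E_1\otimes E_2$, and your two inequalities supply exactly the justification it leaves implicit: the subsheaf $E_1^1\otimes E_2^1$ gives $\ge$, and the tensored filtration, whose graded pieces are semistable of slope at most $\mu_{\max}(E_1)+\mu_{\max}(E_2)$, gives $\le$. The only variation is that you obtain the $\mu_{\min}$ identity by dualizing, whereas the paper directly identifies $(E_1/E_1^{l_1-1})\otimes(E_2/E_2^{l_2-1})$ as the maximal semistable quotient; the torsion-free/big-open-set caveat you flag is also glossed over in the paper's own argument.
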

\begin{proof}
It is an immediate consequence from Lemma \ref{tensor product of ss Higgs bundles is ss} that the Harder-Narasimhan filtration of $(E_{1}\otimes E_{2},\vphi_{E_{1}\otimes E_{2}})$ is obtained by the tensor product of the subpairs in the Harder-Narasimhan filtration of $(E_{1},\vphi_{E_{1}})$ with the subpairs in the Harder-Narasimhan filtration of $(E_{2},\vphi_{E_{2}})$. 
Precisely, if
$$(0,0)=(E_{j}^{0},\vphi_{E_{j}^{0}})\subset(E_{j}^{1},\vphi_{E_{j}^{1}})\subset(E_{j}^{2},\vphi_{E_{j}^{2}})\subset\cdots\subset(E_{j}^{l_{j}-1},\vphi_{E_{j}^{l_{j}-1}})\subset(E_{j}^{l_{j}},\vphi_{E_{j}^{l_{j}}})=(E_{j},\vphi_{E_{j}})$$
is the Harder-Narasimhan filtration of $(E_{j},\vphi_{E_{j}})$ for $j=1,2$, then $(E_{1}^{1}\otimes E_{2}^{1},\vphi_{E_{1}^{1}\otimes E_{2}^{1}})$ is the maximal semistable subpair of $(E_{1}\otimes E_{2},\vphi_{E_{1}\otimes E_{2}})$. \\
Similarly, $\left( (E_{1}/E_{1}^{l_{1}-1})\otimes(E_{2}/E_{2}^{l_{2}-1}),\vphi_{(E_{1}/E_{1}^{l_{1}-1})\otimes(E_{2}/E_{2}^{l_{2}-1})} \right)$ is the maximal semistable quotient of $(E_{1}\otimes E_{2},\vphi_{E_{1}\otimes E_{2}})$. Thus we get the result.
\end{proof}

\begin{lemm}[{cf. \cite[Lemma 4.7]{AB01}}]\label{lemm:adHiggsss}
An $L$-valued principal $G$-Higgs bundle $(E_{G},\vphi)$ is semistable if and only if the associated adjoint Higgs bundle $(\ad(E_{G}), \vphi)$ is semistable.
\end{lemm}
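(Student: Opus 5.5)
The plan is to follow the strategy of \cite[Lemma 4.7]{AB01} and check at each step that the Higgs field is compatible. Two facts will be used throughout. First, $\deg(\ad(E_G))=0$, because $G$ is reductive and $\ad(E_G)\cong\ad(E_G)\dual$ via a nondegenerate $\Ad$-invariant symmetric form $B$ on $\fg$. Second, $[\vphi,-]$ is skew with respect to $B$, i.e. $B([\vphi,a],b)=-B(a,[\vphi,b])$, and the Lie bracket $\ad(E_G)\otimes\ad(E_G)\to\ad(E_G)$ is a morphism of $L$-valued Higgs bundles.

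($\Leftarrow$) Assume $(\ad(E_G),\vphi)$ is semistable. Let $P$ be a maximal parabolic subgroup and $E_P\subset E_G|_U$ a reduction over a big open set $U$ with $\vphi|_U\in H^0(U,\ad(E_P)\otimes L)$.
\begin{itemize}
\item Since $\ad(E_P)$ is a bundle of subalgebras, $[\vphi,\ad(E_P)]\subset\ad(E_P)\otimes L$, so $\ad(E_P)$ is a $\vphi$-invariant subsheaf over $U$.
\item Its saturation in $\ad(E_G)$ is a reflexive $\vphi$-invariant subsheaf on $X$ with the same degree, since $\codim(X\setminus U)\ge2$.
\item Semistability of $(\ad(E_G),\vphi)$ then gives $\mu(\ad(E_P))\le\mu(\ad(E_G))=0$, hence $\deg\ad(E_P)\le 0=\deg\ad(E_G|_U)$.
\end{itemize}

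($\Rightarrow$) Suppose $(\ad(E_G),\vphi)$ is not semistable, and let $0\subset E^1\subset\cdots\subset E^\ell=\ad(E_G)$ be its Higgs Harder--Narasimhan filtration.
\begin{itemize}
\item Let $V:=E^k$ be the largest piece all of whose graded quotients have positive slope. Then $V\neq 0$ and $V$ is $\vphi$-invariant.
\item Restrict to a big open set $U$ on which all $E^i$ are subbundles.
\item By Proposition \ref{mumin and mumax}, $\mu_{\min}(V\otimes V)=2\mu_{\min}(V)>\mu_{\min}(V)>\mu_{\max}(\ad(E_G)/V)$. A Higgs morphism from a Higgs bundle whose minimal HN slope exceeds the maximal slope of the target is zero, so the bracket maps $V\otimes V$ into $V$. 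The same argument applied iteratively (slopes add under brackets) shows $V$ is a bundle of nilpotent subalgebras.
\item Using $B$, the filtration is self-dual: $(E^i)^\perp$ corresponds to $(\ad(E_G)/E^i)\dual$. In particular $V\subset V^\perp$, and the same slope argument shows $[V,V^\perp]\subset V$.
\item Pointwise, a subalgebra $\fp'=V_x^\perp$ whose orthogonal $V_x$ is a nilpotent ideal of it is a parabolic subalgebra. This is the structural input from \cite{AB01} and Ramanathan \cite{Ram75}, and I expect it to be the main obstacle to verify carefully.
\item Granting this, $V^\perp$ defines a reduction $E_Q$ of $E_G|_U$ to a parabolic $Q$ with $\ad(E_Q)=V^\perp$.
\item The Higgs field lies in $\ad(E_Q)\otimes L$: since $[\vphi,V]\subset V\otimes L$, $\vphi$ lies in the normalizer of $V$, and a parabolic subalgebra is self-normalizing, with the normalizer of its nilradical equal to itself.
\end{itemize}

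It remains to compute degrees and reduce to a maximal parabolic.
\begin{itemize}
\item From $\ad(E_G)/V^\perp\cong V\dual$ and $\deg\ad(E_G)=0$ we get $\deg V^\perp=\deg V>0=\deg\ad(E_G|_U)$, so $E_Q$ is a destabilizing reduction.
\item If $Q$ is not maximal, choose a maximal parabolic $P\supset Q$. The extension $E_P:=E_Q\times_Q P$ still satisfies $\vphi\in\ad(E_P)\otimes L$.
\item To show $\deg\ad(E_P)>0$ as well, I would first try a character argument. For a maximal parabolic, $\deg\ad(E_P)$ is a positive multiple of the degree of the line bundle associated to the dominant character defining $P$. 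By construction of $V$ as the positive-slope part, the $Q$-reduction is Harder--Narasimhan type, so this character has positive degree.
\item Alternatively, choose the maximal parabolic using a single jump of the filtration. Replace $V$ by one HN step $E^i$ with $\mu(E^i/E^{i-1})>0\ge\mu(E^{i+1}/E^i)$, and refine if needed so that the resulting parabolic is maximal.
\end{itemize}
This contradicts semistability of $(E_G,\vphi)$.
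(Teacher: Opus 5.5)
Your $(\Leftarrow)$ direction and the construction of $E_Q$ follow the paper's argument, which is that of \cite[Lemma 4.7]{AB01}. Your $V$ and $V^\perp$ are the paper's $E_{-1}$ and $E_0$. The bracket-vanishing via $\mu_{\min}/\mu_{\max}$ and the normalizer argument placing $\vphi$ in $\ad(E_Q)\otimes L$ are also the same. On the parabolicity fact you flag: your slope argument gives $[V,E^i]\subset E^{i-1}$ for every $i$, so $V_x$ consists of $\ad$-nilpotent elements of $\fg$, which is what that criterion needs.

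\textbf{The gap is the final step.} Stability is tested only on \emph{maximal} parabolics, so from $\deg\ad(E_Q)=\deg V^\perp>0$ you must produce a maximal $P\supseteq Q$ with $\deg\ad(E_P)>0$. You take an arbitrary maximal $P_\alpha\supseteq Q$ and assert $\deg E_Q(\omega_\alpha)>0$ because ``the $Q$-reduction is Harder--Narasimhan type''. That is the characterizing property of canonical reductions. Your proposal does not prove it, and it is not available off the shelf for $L$-valued Higgs $G$-bundles, so as written the step is unsupported.

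Your fallback does not fill the gap. The HN step you describe, with $\mu(E^i/E^{i-1})>0\ge\mu(E^{i+1}/E^i)$, is just $V$ again. For earlier steps, $(E^i)^\perp$ need not even be a subalgebra. For example, take the $\mathrm{SL}_3$-bundle of $\mathcal{O}(d)\oplus\mathcal{O}\oplus\mathcal{O}(-d)$ on a curve with $\vphi=0$ and $d>0$. Then $(E^1)^\perp$ contains $e_{21}$ and $e_{32}$ but not $[e_{32},e_{21}]=e_{31}$. So ``refine until maximal'' has no content.

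\textbf{The fix.} You need only \emph{one} maximal parabolic, and the positivity you already proved is enough.
\begin{itemize}
\item Set $\chi_Q:=\det(\Ad|_{\mathfrak{q}})$, so that $\deg E_Q(\chi_Q)=\deg V^\perp>0$.
\item On a maximal torus, $\chi_Q=\sum_{\beta\in\Phi(\mathfrak{n}_Q)}\beta=2\rho-2\rho_{L_Q}$.
\item $\chi_Q$ is a character of $Q$, so it pairs to $0$ with the simple coroots of $L_Q$. It pairs to at least $2$ with $\alpha^\vee$ for every simple root $\alpha$ not in $L_Q$.
\item Hence $\chi_Q=\sum_\alpha c_\alpha\omega_\alpha$ with all $c_\alpha>0$, i.e.\ $\chi_Q$ is dominant.
\item Therefore $\deg E_Q(\omega_\alpha)>0$ for \emph{some} $\alpha$, after passing to integral multiples.
\item For that $\alpha$, your own observation that $\det(\Ad|_{\fp_\alpha})$ is a positive multiple of $\omega_\alpha$ gives $\deg\ad(E_Q\times_Q P_\alpha)>0$. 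Also $\vphi\in\ad(E_{P_\alpha})\otimes L$ still holds, so $P_\alpha$ destabilizes.
\end{itemize}
This is exactly the paper's route: it shows $\chi_0$ (the character of $\det E_0$) is dominant and invokes the Higgs version of \cite[Proposition 1.4]{AB01}.

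Your stronger claim, that every maximal $P\supseteq Q$ destabilizes, is also true, but it needs its own argument. For each simple root $\beta$ not in $L_Q$, projecting $\mathfrak{n}_Q$ onto the $Z(L_Q)$-weight space of weight $\beta$ is $Q$-equivariant. It therefore gives a $\vphi$-compatible quotient of $V$, which has positive degree since $\mu_{\min}(V)>0$. Combine this with the fact that fundamental weights are nonnegative combinations of simple roots, with positive coefficient on their own root.
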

\begin{proof}
By Lemma \ref{lemm:reduction} below, it is enough to prove under the assumption that $G$ is semisimple with trivial center.
That is, the adjoint representation of $G$ is faithful. 

Suppose that $(E_{G},\vphi)$ is not semistable. Then there exists a big open subset $U$, a maximal parabolic subgroup $P$ of $G$ and its holomorphic reduction $E_P \subset E_G|_{U}$ over $U$ such that the Higgs field $\vphi|_{U} \in H^0(U, \ad(E_G) \otimes L)$ restricts to $\vphi|_{U} \in H^0(U, \ad(E_P)\otimes L)$ and we have the inequality $\deg(\ad(E_G|_{U})) < \deg(\ad(E_P))$. 

Let $E_{0}=\ad(E_{P})$ which is a subbundle of $\ad(E_{G}|_{U})$. 
Since $G$ is semisimple, using the Killing form on $\fg$, we have $\ad(E_{G}|_{U}) \cong \ad(E_{G}|_{U})\dual$, that is, $\deg(\ad(E_{G}|_{U}))=0$. 
Since $\codim (X \setminus U) \ge 2$, $\ad(E_P)$ extends to an analytic subsheaf $\cF$ of $\ad(E_{G})$ and we can easily check that $\vphi(\cF) \subset \cF \otimes L$. Since $\deg(\ad(E_G)) = \deg(\ad(E_G|_{U}))$ and $\deg(\ad(E_P)) = \deg(\cF)$, we have $\deg(\ad(E_G)) < \deg(\cF)$. Thus $(\cF,\vphi)$ is a destabilizing subobject of $(\ad(E_{G}),\vphi)$, so that $(\ad(E_{G}),\vphi)$ is not semistable.

Conversely, suppose that $(\ad(E_{G}),\vphi)$ is not semistable. Let
\begin{equation}\label{HN}0=E_{0}\subset E_{1}\subset\cdots\subset E_{k-1}\subset E_{k}=\ad(E_{G})\end{equation}
be the Harder-Narasimhan filtration of $(\ad(E_{G}),\vphi)$, where $E_{j}$ is $\vphi$-invariant. Since each $E/E_{j}$ is torsion free, we know that they are locally free outside an analytic subset of codimension at least two. Let $V$ be the union of all these $k-1$ analytic subsets of codimension at least two. The complement $X\setminus V$ will be denoted by $U$.

For simplicity, denote $\ad(E_{G})$ by $E$. For any $x\in U$ consider $E_{j,x}^{\perp}=\{v\in E_{x}\,|\,\langle v,E_{j,x}\rangle=0\}$, where $\langle-,-\rangle$ is the Killing form on $E_{x}\cong\fg$. Let $E_{j}^{\perp}$ be the subsheaf defined by the kernel of the composition of the morphism:
\[
E \to E\dual \to E_j\dual
\]
where the morphism $E \to E\dual$ is the morphism induced by the Killing form. Then, the fiber of $E_j^\perp$ over any $x\in U$ is $E_{j,x}^{\perp}$. Note that $E_j^\perp$ is a saturated subsheaf. Since $G$ is semisimple, the Killing form is nondegenerate, and then we have $E_{j}^{\perp}\cong(E/E_{j})\dual$ over $U$.

Note that for the Killing form, we have $\bracket{[A,B],C} = \bracket{A,[B,C]}$. Moreover, by an abuse of notation, we let $\vphi : E \to E\otimes L$ be the endomorphism by taking the adjoint action of the Higgs field $\vphi \in H^0(X,E\otimes L)$, $[-,\vphi]$. Therefore, since $E_j$ is $\vphi$-invariant(i.e. $\vphi(E_j) \subset E_j \otimes L$), $E_j^\perp$ is also $\vphi$-invariant. 
Hence, by setting $W_{j}:=E_{k-j}^{\perp}$ 
we obtain the following filtration of $E$ over $U$
$$0=W_{0}\subset W_{1}\subset\cdots\subset W_{k-1}\subset W_{k}=E,$$
where $W_{j}$ is $\vphi$-invariant. We know that the dual of a semistable Higgs pair is again semistable.
Since $E\cong E\dual$, the filtration of $E$ over $U$ by $W_{j}$ coincides with the Harder-Narasimhan filtration (\ref{HN}). In other words, we have $E_{j}=E_{k-j}^{\perp}$ on $U$ for all $j\in[0,k]$.

Therefore, (\ref{HN}) is of the following form
$$0=E_{-l-1}\subset E_{-l}\subset E_{-l+1}\subset\cdots\subset E_{-1}\subset E_{0}\subset E_{1}\subset\cdots\subset E_{l-1}\subset E_{l}=E,$$
where $E_{-j}$ is orthogonal to $E_{j-1}$ for the Killing form and $E_{j}$ is $\vphi$-invariant.

Let $f:E_{0}\otimes E_{0}\to E/E_{0}$ be the composition of the Lie bracket operation with the natural projection $E\to E/E_{0}$. Note that $\vphi_{E_{0}\otimes E_{0}} = \vphi_{E_0}\otimes\id_{E_{0}}+\id_{E_{0}}\otimes\vphi_{E_0}$. Since
$$
[[\vphi,x],y]+[x,[\vphi,y]]=[\vphi,[x,y]]
$$
for any sections $x,y\in E_{0}$ by the Jacobi identity, we have
$$
f\in H^{0}(X,\cH om((E_{0}\otimes E_{0},\vphi_{E_{0}\otimes E_{0}}),(E/E_{0},\vphi_{E/E_{0}}))),
$$
where $\vphi_{E/E_{0}}$ is induced from $\vphi$. Using Proposition \ref{mumin and mumax}, we have
$$\mu_{\min}((E_{0}\otimes E_{0},\vphi_{E_{0}\otimes E_{0}}))=2\mu_{\min}((E_{0},\vphi_{E_{0}}))=2\mu(E_{0}/E_{-1})$$
and
$$
\mu_{\max}((E/E_{0},\vphi_{E/E_{0}}))=\mu(E_{1}/E_{0}).
$$
Since $E_{-1}$ is the orthogonal part of $E_{0}$, the Killing form induces a nondegenerate quadratic form on $E_{0}/E_{-1}$. Consequently, we have $E_{0}/E_{-1}\cong(E_{0}/E_{-1})\dual$, which implies that $\mu(E_{0}/E_{-1})=0$. (Note that for any torsion free sheaf $F$ we have $\mu(F)=-\mu(F^{\vee})$.)

Since we have
$$
\mu_{\min}((E_{0}\otimes E_{0},\vphi_{E_{0}\otimes E_{0}}))=2\mu(E_{0}/E_{-1})=0>\mu(E_{1}/E_{0})=\mu_{\max}((E/E_{0},\vphi_{E/E_{0}})),
$$
it follows from the analogous statement of \cite[Proposition 2.8(1)]{AB01} that
$$H^{0}(X,\cH om((E_{0}\otimes E_{0},\vphi_{E_{0}\otimes E_{0}}),(E/E_{0},\vphi_{E/E_{0}})))=0.$$
In particular, $f=0$; that is, $E_{0}$ is closed under the Lie algebra structure of the fibers of $E$ compatible with $\vphi$.

Consider the following homomorphism
$$f_{j}:E_{-j}\otimes E_{-1}\to E/E_{-j-1},$$
where $j\ge0$, defined using the Lie bracket operation and the projection of $E$ to $E/E_{-j-1}$. Repeating the above argument and using the property of the Harder-Narasimhan filtration that $\mu(E_{i}/E_{i-1})>\mu(E_{i+1}/E_{i})$ we deduce that $f_{j}=0$ in
$$
H^{0}(X,\cH om((E_{-j}\otimes E_{-1},\vphi_{E_{-j}\otimes E_{-1}}),(E/E_{-j-1},\vphi_{E/E_{-j-1}}))).
$$
In other words, we have
\begin{align}\label{eq:inclusion}
[E_{-j},E_{-1}]\subset E_{-j-1}
\end{align}
for any $j\ge0$. Moreover, this is compatible with $\vphi$.

Using the above inclusion we conclude that $E_{-1}$ is a nilpotent Lie ideal of $E_{0}$ compatible with $\vphi$. We can complete the proof parallel to the proof of \cite[Proposition 2.10]{AB01} as follows.

By Lemma \ref{lemm:parabolic reduction}, we can construct a parabolic reduction $E_P$ of $E_G$ from $E_0$, where $P \subset G$ is a parabolic subgroup. Then the line bundle $\det(E_0)$ corresponds to the character $\chi_0: P \to \CC^*$, and as in the proof of \cite[Proposition 2.10]{AB01}, we can prove that $\chi_0$ is a dominant character. We can check that $E_0 \cong E_P \times_P \fp$ and since $E_0$ is $\vphi$-invariant, $(E_P, \vphi)$ is an $L$-valued $G$-Higgs subbundle of $E_G|_U$. 

Note that we can easily generalize \cite[Proposition 1.4]{AB01} for $L$-valued principal $G$-Higgs bundles. Then, since $\deg(E_P(\chi_0)) = \deg(\det(E_0)) = \deg(E_0) > 0$, the generalized version of \cite[Proposition 1.4]{AB01} implies that $E_G$ is not semistable, which leads to a contradiction.
\end{proof}

The following lemma is to justify the assumption made in the beginning of the proof of Lemma \ref{lemm:adHiggsss}.

\begin{lemm}\label{lemm:reduction}
Let $G$, $G'$ be reductive algebraic groups, and let $f:G\to G'$ be a surjective homomorphism with $\ker f\subset Z(G)$, where $Z(G)$ is the center of $G$. Let $(E,\vphi)$ be an $L$-valued principal $G$-Higgs bundle and $(E',\vphi')$ the $L$-valued $G'$-Higgs bundle obtained by the extension of the structure group by $f$ as follows:
$$E'=E\times_{G,f}G'\text{ and }\vphi'=df\circ\vphi$$
Then $(E,\vphi)$ is stable (resp. semistable) if and only if $(E',\vphi')$ is stable (resp. semistable).
\end{lemm}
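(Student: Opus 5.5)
The plan is to set up a bijection between the data entering Definition \ref{def:stability of VW G-bundle} for $(E,\vphi)$ and the corresponding data for $(E',\vphi')$, and to check that this bijection preserves the Higgs-field condition and both degrees. Since $\ker f \subset Z(G)$ and $f$ is surjective, $\ker f$ lies in every parabolic subgroup, because every parabolic contains a maximal torus and hence $Z(G)$. Therefore $P \mapsto P' := f(P)$ is a bijection from the maximal parabolic subgroups of $G$ to those of $G'$, with inverse $P' \mapsto f^{-1}(P')$. Moreover $G/P \cong G'/P'$ as $G$-spaces, $G$ acting on the right-hand side through $f$.

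First I will compare reductions. The identification $G/P \cong G'/P'$ gives an isomorphism of fibre bundles $E/P = E\times_G G/P \cong E'\times_{G'} G'/P' = E'/P'$ over $X$. Hence, for a big open subset $U$, holomorphic sections $\sigma: U \to E/P$ correspond bijectively to sections $\sigma': U\to E'/P'$, that is, $E_P \subset E|_U$ corresponds to $E'_{P'} = E_P\times_{P,f} P' \subset E'|_U$.

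Second, I will compare Lie algebras and Higgs fields. The differential $df:\fg\to\fg'$ is surjective with kernel $\mathrm{Lie}(\ker f)\subset \fz(\fg)$, and $df^{-1}(\fp') = \fp$ because $\ker df \subset \fz(\fg)\subset\fp$. This induces a surjection $\ad(E)\to\ad(E')$ with kernel the trivial bundle $E\times_G \mathrm{Lie}(\ker f)$; the bundle is trivial because $G$ acts trivially on central elements. Consequently $\vphi|_U \in H^0(U,\ad(E_P)\otimes L)$ holds if and only if $\vphi'|_U = df\circ\vphi|_U \in H^0(U,\ad(E'_{P'})\otimes L)$, since $\ad(E_P)$ is exactly the preimage of $\ad(E'_{P'})$.

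Third, I will compare degrees. The isomorphism $\fg/\fp \cong \fg'/\fp'$ of $P$-modules yields $\ad(E|_U)/\ad(E_P) \cong \ad(E'|_U)/\ad(E'_{P'}) \cong \sigma^*T_{E/P}$, in the notation of the remark following Definition \ref{def:stability of VW G-bundle}. Since degree is additive in short exact sequences on $U$ (with $\deg$ computed on $X$ via the big open subset),
\[
\deg(\ad(E|_U)) - \deg(\ad(E_P)) = \deg \sigma^*T_{E/P} = \deg(\ad(E'|_U)) - \deg(\ad(E'_{P'})).
\]
So the strict and non-strict inequalities of Definition \ref{def:stability of VW G-bundle} hold simultaneously for the two sides, which gives both the stable and the semistable equivalences.

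The main obstacle is the group-theoretic step that $f$ induces a bijection on maximal parabolics with $f^{-1}(f(P)) = P$ and $\fg/\fp\cong\fg'/\fp'$. I would handle this with the standard facts that parabolics contain the center, are their own normalizers, and that $f$ restricts to an isogeny-type map on derived groups, so that Borel subgroups correspond bijectively. The remaining steps are formal.
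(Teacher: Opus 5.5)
Your proposal is correct and follows essentially the paper's route: identify $E/P\cong E'/P'$ via $f$, transport reductions along this isomorphism, match the relative tangent bundles $T_{E/P}\cong T_{E'/P'}$ (equivalently the quotients $\ad(E|_U)/\ad(E_P)$ and $\ad(E'|_U)/\ad(E'_{P'})$), and compare the Higgs-field conditions using $df(\fp)=\fp'$. For the direction from a reduction of $E'$ back to one of $E$, the paper invokes Ramanathan's cohomological argument, whereas you take $\sigma=\bar{f}^{-1}\circ\sigma'$ directly. You also prove the reverse Higgs-field implication, which the paper leaves implicit, via $df^{-1}(\fp')=\fp$ (from $\ker df\subset\fz(\fg)\subset\fp$); this is cleaner and fills those small gaps.
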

\begin{proof}
Since $f:G\to G'$ is a surjective homomorphism with $\ker f\subset Z(G)$, it is easy to see that the induced morphism $\bar{f}:E/P\to E'/P'$ is an isomorphism, where $P,P'$ are maximal parabolic subgroups of $G,G'$ respectively such that $f^{-1}(P')=P$. Then we have the induced isomorphism $\tilde{f}:E\times_{P,\Ad_{G}}(\fg/\fp)\to E'\times_{P',\Ad_{G'}}(\fg'/\fp')$, that is, $\tilde{f}:T_{E/P}\to T_{E'/P'}$. Further, since $df(\fp)=\fp'$, $\vphi\in H^{0}(X,\ad(E_{P})\otimes L)$ implies $\vphi'\in H^{0}(X,\ad(E'_{P'})\otimes L)$.

Assume that $(E,\vphi)$ is not stable. Then there is a maximal parabolic subgroup $P\subset G$ and a reduction of the structure group $\sigma:U\to E/P$ with $\vphi\in H^{0}(X,\ad(E_{P})\otimes L)$ such that $\deg\,\sigma^{*}(T_{E/P})\le0$. Let $\sigma'=\bar{f}\circ\sigma$ so that $\sigma':U\to E'/P'$ is a reduction of the structure group. Then $\vphi'\in H^{0}(X,\ad(E'_{P'})\otimes L)$, $\sigma'^{*}(T_{E'/P'})=\sigma^{*}(\bar{f}^{*}(T_{E'/P'}))=\sigma^{*}(T_{E/P})$ and thus $\deg\,\sigma'^{*}(T_{E'/P'})\le0$, which implies that $(E',\vphi')$ is not stable.

Assume that $(E,\vphi)$ is stable. Let $\sigma':U\to E'/P'$ be a reduction of the structure group with $\vphi'\in H^{0}(X,\ad(E'_{P'})\otimes L)$. It is enough to show that $\sigma'=\bar{f}\circ\sigma$ for some reduction of the structure group $\sigma:U\to E/P$. But this is verified from the cohomological argument of the proof of \cite[Proposition 7.1]{Ram75}. The semistability is also preserved by the same argument.
\end{proof}

The following lemma gives the reduction of the structure group that we are looking for.

\begin{lemm}[{cf. \cite[Lemma 2.11]{AB01}}]\label{lemm:parabolic reduction}
Over the open set $U$ of $X$, 
$E_{0}$ is a bundle of parabolic subalgebras such that $\vphi\in H^{0}(X,E_{0}\otimes L)$, and it gives a reduction $\sigma :U\to E_{G}/P$ of the structure group of $E_{G}$ to a parabolic subgroup $P$ of $G$.
\end{lemm}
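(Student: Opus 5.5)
We follow the strategy of \cite[Lemma 2.11]{AB01}, working fiberwise over $U$, where every sheaf in the Harder-Narasimhan filtration \eqref{HN} is a subbundle. Throughout we use the reduction from the proof of Lemma \ref{lemm:adHiggsss}: $G$ is semisimple with trivial center, so $\fg\cong\fE_x$ via the adjoint representation, and the Killing form is nondegenerate.

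\textbf{Step 1: $E_{0,x}$ is a parabolic subalgebra.} Fix $x\in U$. The proof of Lemma \ref{lemm:adHiggsss} already gives two facts:
\begin{enumerate}
\item $E_{0,x}$ is a Lie subalgebra, since $f=0$;
\item $E_{-1,x}=E_{0,x}^{\perp}$ is a nilpotent ideal of $E_{0,x}$, by the inclusion \eqref{eq:inclusion}.
\end{enumerate}
Now let $\fn_x$ be the nilradical of $E_{0,x}$. A subalgebra whose Killing-orthogonal lies in its nilradical is parabolic. This is the standard criterion used in \cite{AB01} and \cite[Proposition 2.1]{Ram75}: $E_{0,x}^{\perp}$ consists of ad-nilpotent elements and is an ideal, so $E_{0,x}$ contains a Borel subalgebra.

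If needed, we can see the Borel containment more directly. Write $E_{0,x}^{\perp}=E_{-1,x}$. Then $E_{0,x}$ is the normalizer of the nilpotent subalgebra $E_{-1,x}$, because its orthogonal is $E_{-1,x}$. By the Borel–Morozov theorem, the normalizer of a nilpotent subalgebra consisting of nilpotent elements, which is itself the Killing-orthogonal of that subalgebra, is parabolic. Hence $E_{0,x}=\fp_x$ is parabolic for every $x\in U$.

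\textbf{Step 2: all fibers are conjugate to one parabolic $\fp$.} Parabolic subalgebras of $\fg$ form finitely many conjugacy classes. Each class is a connected component of the variety of parabolic subalgebras, namely the flag varieties $G/P$. Since $U$ is connected and $x\mapsto E_{0,x}$ is a holomorphic family of subspaces of constant dimension, the conjugacy class of $E_{0,x}$ is constant. Fix a parabolic $P\subset G$ with Lie algebra $\fp$ in that class.

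\textbf{Step 3: the reduction $\sigma$.} Every parabolic subgroup is its own normalizer and is determined by its Lie algebra. Therefore $G/P$ is identified with the orbit of $\fp$ in the Grassmannian of $\fg$. It follows that $E_G/P=E_G\times_G(G/P)$ embeds in the Grassmann bundle $\Gr(\fE)$ of subspaces of $\fE$ with the appropriate dimension. The subbundle $E_0|_U$ defines a holomorphic section of $\Gr(\fE)$ over $U$. By Step 2 this section lands in $E_G/P$, and we take it as $\sigma:U\to E_G/P$. By construction $\sigma^*(E_G\times_P\fp)=E_0$, i.e.\ $\ad(E_P)=E_0$.

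\textbf{Step 4: $\vphi$ lies in $E_0\otimes L$.} The Higgs field $\vphi$ commutes with itself, so $\vphi(x)\otimes\lambda$ preserves the Harder-Narasimhan pieces. Concretely, $E_j$ is $[-,\vphi]$-invariant. We must show $\vphi\in E_0\otimes L$ rather than merely that $\ad\vphi$ preserves $E_0$. Pair $\vphi$, paired with any local section $\lambda$ of $L^{\vee}$, with $w\in E_{-1}=E_0^{\perp}$. By ad-invariance of the Killing form,
\[
\langle\vphi_\lambda,[a,w]\rangle=\langle[\vphi_\lambda,a],w\rangle .
\]
The key point is that the normalizer of $\fp$ is $\fp$ itself. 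Since $[\vphi_\lambda,\fp_x]\subset\fp_x$, the element $\vphi_\lambda$ normalizes $\fp_x$, so $\vphi_\lambda\in\fp_x$. Thus $\vphi\in H^0(U,E_0\otimes L)$.

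Finally, $U$ is big and $E_0$ is saturated. The section $\vphi$ is global on $X$ and lies in $E_0\otimes L$ over $U$, which is the statement as phrased.

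The main obstacle is Step 1, the Lie-theoretic criterion that a subalgebra whose Killing-orthogonal is a nilpotent ideal is parabolic. The first thing to try is to quote \cite[Lemma 2.11]{AB01} verbatim, since the Higgs field plays no role fiberwise.
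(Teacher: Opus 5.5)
Your proposal is correct and follows essentially the paper's route. In both, $E_{-1}=E_0^{\perp}$ is an ideal of $E_0$ consisting of ad-nilpotent elements, and $E_0$ is its normalizer, hence parabolic by the Borel--Morozov criterion (the paper cites \cite[Lemma II-2-(ii)]{Falt93} for this step). The reduction $\sigma$ then comes from the Grassmannian embedding as in \cite[Lemma 2.11]{AB01}, and $\vphi\in E_0\otimes L$ follows from a normalizer argument. The differences are minor: you use the self-normalizing property $N_{\fg}(\fp)=\fp$ directly where the paper runs a Jacobi-identity computation with $E_0=N(E_{-1})$, and you write out the constancy of the conjugacy class over the connected set $U$, which the paper leaves to \cite{AB01}.
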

\begin{proof}
By \eqref{eq:inclusion}, $E_{-1}$ is a bundle of nilpotent subalgebras. By \cite[Equation (4) on page 216]{AB01}, $E_{-1}$ is the radical of $E_{0}$. Since $E_{0}$ is the normalizer $N_{E_{0}}(E_{-1})$ of $E_{-1}$, $E_{0}$ is a bundle of parabolic subalgebras by \cite[Lemma II-2-(ii)]{Falt93}. Since
$$[[\vphi,x],y]=-[y,[\vphi,x]]=-[x,[\vphi,y]]+[\vphi,[x,y]]\in E_{-1}$$
for any section $x\in E_{0}$ and any section $y\in E_{-1}$ by the Jacobi identity, $\vphi_{E_{0}}$ is induced from $\vphi_{E_{-1}}$ and then $\vphi\in H^{0}(X,E_{0}\otimes L)$. The proof of the remaining part is the same as that of \cite[Lemma 2.11]{AB01}.
\end{proof}

Since $(E,\vphi)=(\ad(E_{G}),\vphi)$ is semistable, we have its socle, i.e., maximal polystable subpair. Let $(S,\vphi_{S})$ be the socle of $(E,\vphi)$. We can reduce the structure group of $(E_{G},\vphi)$ using the socle $(S,\vphi_{S})$ in the following lemma.

\begin{lemm}[{cf. \cite[Proposition 2.12]{AB01}}]\label{existence of parabolic subalgerba bundle}
There is a subalgebra bundle $\fP$ of $\ad(E_{G})$ over an open subset $U\subseteq X$, with $\codim(X\setminus U)\ge 2$, such that $\vphi\in H^{0}(X,\fP\otimes L)$, $\deg\fP=0$ and the fibers of $\fP$ over $U$ are isomorphic to the Lie algebra $\fp$ of a parabolic subgroup $P$ of $G$. 
\end{lemm}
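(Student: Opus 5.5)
We follow the proof of \cite[Proposition 2.12]{AB01}, adapted to Higgs pairs. The semistability of $(E,\vphi)=(\ad(E_G),\vphi)$ is the standing assumption preceding the statement (it follows from Lemma \ref{lemm:adHiggsss}). We may also assume, by Lemma \ref{lemm:reduction}, that $G$ is semisimple, so that the Killing form identifies $E\cong E\dual$ and $\deg E=0$.

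Let $(S,\vphi_S)$ be the socle of $(E,\vphi)$. Because it is unique, it is preserved by every automorphism of the pair; in particular it is $\vphi$-invariant, it is saturated, and it has slope $0$. Let $U$ be the big open subset on which $S$ and $E/S$ are locally free. Form the Killing orthogonal $S^\perp$, exactly as $E_j^\perp$ was formed in the proof of Lemma \ref{lemm:adHiggsss}. Since $\bracket{[A,B],C}=\bracket{A,[B,C]}$, the subsheaf $S^\perp$ is again $\vphi$-invariant. It is isomorphic to $(E/S)\dual$, hence semistable of slope $0$.

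If $S=E$, then $(E,\vphi)$ is polystable. In that case we take $\fP=E$, which corresponds to the improper parabolic $P=G$, and the conclusion is immediate. Otherwise we set
\[
N := S\cap S^\perp \quad\text{and}\quad \fP := N_E(N),
\]
the normalizer of $N$ on $U$, following the model of \cite{AB01}.

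The key structural step is to show that $S^\perp\subset S$, i.e.\ $N=S^\perp$ is isotropic and $\fP$ is a bundle of parabolic subalgebras. We proceed in three moves.

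\textbf{Subalgebra structure.} First we show that $S$ is a subalgebra. The bracket gives a morphism of Higgs pairs
\[
(S\otimes S,\vphi_{S\otimes S})\to (E/S,\vphi_{E/S}),
\]
whose compatibility with the Higgs fields follows from the Jacobi identity, as in the proof of Lemma \ref{lemm:adHiggsss}. By \cite[Proposition 5.3]{ACGP03b}, $S\otimes S$ is polystable of slope $0$. On the other hand, $E/S$ is semistable of slope $0$ with no polystable subobject mapping onto its socle, by maximality of $S$. So any nonzero image of $S\otimes S$ in $E/S$ would be polystable, and lifting it would enlarge the socle. This forces the bracket map to vanish, so $[S,S]\subset S$.

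\textbf{Orthogonal is a nilpotent ideal.} The same image-is-polystable argument shows $[S,S^\perp]\subset S^\perp$. From this one deduces that $S^\perp\subset S$, and that $N=S^\perp$ is a nilpotent ideal of $S$. For the nilpotency we use that the Killing form restricted to $N$ has $N$ in its kernel; we then apply Cartan's criterion fiberwise, as in \cite{AB01}.

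\textbf{Parabolicity.} Finally, \cite[Lemma II-2-(ii)]{Falt93} gives that the normalizer $\fP$ of a nilpotent subalgebra bundle of this type is a bundle of parabolic subalgebras. The fiber type is constant on the connected set $U$, so all fibers are isomorphic to $\fp$ for a fixed parabolic $P$.

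It remains to check the two stated properties of $\fP$.

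\textbf{$\vphi$-invariance.} The Jacobi identity computation from Lemma \ref{lemm:parabolic reduction} shows that $[\vphi,-]$ preserves the normalizer of a $\vphi$-invariant subalgebra. Since $\vphi$ normalizes $N$, we get $\vphi\in H^0(X,\fP\otimes L)$.

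\textbf{Degree.} We have $\fP^\perp=\mathrm{rad}(\fP)$, which is the nilradical of $\fP$. Hence $E/\fP\cong \fP^{\perp\vee}$, and therefore
\[
\deg\fP=-\deg(E/\fP)=\deg\fP^\perp.
\]
Semistability of $(E,\vphi)$, together with the slope-$0$ constraints on the pieces built from $S$, pins these degrees down and gives $\deg\fP=0$.

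The main obstacle is the first structural step: proving that $S$ is closed under the bracket, and that $S^\perp$ is an isotropic nilpotent ideal, using only Higgs-socle maximality. In the vector-bundle setting \cite{AB01} rely on the socle being the unique maximal polystable subsheaf. Here I would first try to show that the saturation of the image of a polystable Higgs pair in a semistable pair of the same slope is again polystable. I expect this to follow from the Hermitian–Einstein (vortex) metric of \cite[Theorem 3.1]{ACGP03} via the second-fundamental-form argument.
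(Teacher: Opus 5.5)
Your top-level plan matches the paper's. Its proof simply defers to \cite[Proposition 2.12]{AB01} and adds the Jacobi-identity remark that normalizers of $\vphi$-invariant subalgebra bundles are $\vphi$-invariant, which you also have. The problem is that the ``key structural step'' you supply is false: in general $S^\perp\not\subset S$, so $N=S^\perp$ cannot be the nilpotent ideal you need.

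Here is a counterexample. Take $\vphi=0$, $G=\SL_2$, a curve $C$ of genus $\geq 1$, and $E_G$ the frame bundle of a non-split extension $0\to\mathcal{O}_C\to V\to\mathcal{O}_C\to 0$.
\begin{enumerate}
\item $E_G$ is semistable.
\item $\ad(E_G)=\mathcal{E}nd_0(V)$ contains the nilradical and Borel subalgebra bundles $\mathfrak{n}\subset\mathfrak{b}$ of the flag $\mathcal{O}_C\subset V$, and all graded pieces are $\cong\mathcal{O}_C$.
\item Since $\End(V)$ is spanned by $\id_V$ and $V\to\mathcal{O}_C\to V$, we get $h^0(\ad(E_G))=1$, so the socle is $S=\mathfrak{n}$.
\item But $S^\perp=\mathfrak{n}^\perp=\mathfrak{b}\supsetneq S$, and $\mathfrak{b}$ is not even nilpotent.
\end{enumerate}
Taking the product with a stable $\SL_2$-bundle on a curve of genus $\geq 2$ gives a case where neither of $S$, $S^\perp$ contains the other. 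Your argument uses only semistability, so it must cover these cases. The object to analyse is $N=S\cap S^\perp$, as you defined it. In the example $N=\mathfrak{n}$, and $N_E(N)=\mathfrak{b}$ is the desired $\fP$.

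With $N=S\cap S^\perp$, the remaining steps are the real content of the lemma, and you assert them rather than prove them.

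\emph{Nilpotency and parabolicity.} Cartan's criterion applied to $B(N,S)=0$ gives only solvability of $N$. It does not show that $N$ consists of $\ad$-nilpotent elements, and Killing-form information alone cannot. For example, let $\omega$ be a primitive cube root of unity. The line $\CC\,\mathrm{diag}(1,\omega,\omega^2)\subset\mathfrak{sl}_3$ is Killing-isotropic and consists of semisimple elements, and its normalizer is a Cartan subalgebra, not a parabolic. So you need genuine input from the socle, and you must actually verify the hypotheses of \cite[Lemma II-2-(ii)]{Falt93}; the phrase ``of this type'' does not do that.

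\emph{Degree.} Semistability of $(E,\vphi)$ gives only $\deg\fP\le 0$. The reverse inequality needs an argument, for instance:
\begin{enumerate}
\item $\deg N=0$, because $N=\ker(S\to S\dual)$ and the image of a morphism between semistable pairs of slope $0$ has degree $0$.
\item The map $x\mapsto[x,-]$ embeds $E/\fP$ $\vphi$-equivariantly into the semistable slope-$0$ pair $\cH om(N,E/N)$, which gives $\deg\fP\ge 0$.
\end{enumerate}

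\emph{Subalgebra step.} Your justification of $[S,S]\subset S$ is wrong as stated. $E/S$ always has polystable subpairs (its own socle), and their preimages in $E$ are non-split extensions, so maximality of $S$ is not contradicted. What works is to consider the image of the bracket $S\otimes S\to E$ in $E$ itself. It is a quotient of a polystable pair and a subpair of a semistable pair, both of slope $0$. Granting the image statement you flag, it is therefore polystable and lies in $S$.
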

\begin{proof}
Since the normalizer of a $\vphi$-invariant subalgebra bundle is also $\vphi$-invariant by the Jacobi identity, we use the same argument as the proof of  \cite[Proposition 2.12]{AB01}.
\end{proof}

\begin{theo}[{cf. \cite[Theorem 4.8]{AB01}}] \label{thm:stabcompare}
If $(E_{G},\vphi)$ is stable, then $(\ad(E_{G}),\vphi)$ is polystable.
\end{theo}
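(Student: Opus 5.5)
We follow the proof of \cite[Theorem 4.8]{AB01}, replacing vector bundles by $L$-valued Higgs pairs throughout. First, since $(E_G,\vphi)$ is stable it is semistable, so by Lemma \ref{lemm:adHiggsss} the adjoint Higgs bundle $(E,\vphi)=(\ad(E_G),\vphi)$ is semistable. Because $\fg$ carries a nondegenerate invariant form, $E\cong E\dual$ and $\deg E=0$, so $\mu(E)=0$. Let $(S,\vphi_S)$ be the socle of $(E,\vphi)$, the maximal polystable subpair of slope $0$. The goal is to show that $S=E$, which gives polystability.

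Assume instead that $S\neq E$. Then Lemma \ref{existence of parabolic subalgerba bundle} supplies a $\vphi$-invariant subalgebra bundle $\fP\subset\ad(E_G)|_U$ over a big open subset $U$, with $\deg\fP=0$ and with fibers isomorphic to a parabolic subalgebra $\fp$. In \cite{AB01}, $\fP$ is built as the normalizer of a subalgebra obtained from $S$ and $S^\perp$. The important feature is that $\fP$ is a \emph{proper} parabolic subalgebra bundle, and this is exactly where the assumption $S\neq E$ enters. As in Lemma \ref{lemm:parabolic reduction}, $\fP$ gives a reduction $E_P\subset E_G|_U$ to a proper parabolic subgroup $P$ with $\ad(E_P)=\fP$, and $\vphi|_U\in H^0(U,\ad(E_P)\otimes L)$. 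Next we pass to a maximal parabolic subgroup $Q\supseteq P$ and the extended reduction $E_Q=E_P\times_P Q$. The Higgs field still lies in $\ad(E_Q)\otimes L$, because $\fp\subset\fq$. It remains to compare degrees. Since $\deg\ad(E_P)=0=\deg\ad(E_G|_U)$, we have $\deg(\ad(E_G|_U)/\ad(E_P))=0$. The quotient $\ad(E_G)/\ad(E_Q)$ is a Higgs quotient of $\ad(E_G)/\ad(E_P)$, and the latter is semistable of slope $0$ because it is a quotient of the semistable $(E,\vphi)$ by a degree-zero $\vphi$-invariant subsheaf. Hence $\deg(\ad(E_G)/\ad(E_Q))\ge 0$, i.e.\ $\deg\ad(E_Q)\le\deg\ad(E_G|_U)$. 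This contradicts strict stability only if we can rule out equality.

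The main obstacle is getting the strict inequality, in other words excluding $\deg\ad(E_Q)=\deg\ad(E_G|_U)$. The first thing to try is the argument of \cite{AB01}. One chooses $Q$ together with a dominant character $\chi$ of $P$ that is trivial on the center, so that $\deg E_P(\chi)$ is a positive combination of the degrees $\deg\sigma_Q^*T_{E_G/Q}$ over the maximal parabolics $Q\supseteq P$. Then $\deg\fP=0$ forces $\deg E_P(\chi)=0$, hence $\deg\sigma_Q^*T_{E_G/Q}=0$ for some maximal $Q$. That $Q$ is a $\vphi$-compatible reduction violating the strict inequality in Definition \ref{def:stability of VW G-bundle}, which is the desired contradiction. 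The one point to check is that the Higgs-field compatibility survives each step. This holds because normalizers, orthogonal complements for the Killing form, and extensions $\fp\subset\fq$ of $\vphi$-invariant subalgebras all remain $\vphi$-invariant, by the Jacobi identity and the invariance of the Killing form, as already used in Lemma \ref{lemm:adHiggsss}.
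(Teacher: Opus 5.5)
Your argument follows the paper's proof: semistability of $(\ad(E_G),\vphi)$ from Lemma \ref{lemm:adHiggsss}, the socle $(S,\vphi_S)$, the degree-zero $\vphi$-invariant parabolic subalgebra bundle $\fP$ from Lemma \ref{existence of parabolic subalgerba bundle}, the induced reduction $E_P$, and a contradiction with stability from a dominant character of degree zero. You can drop the detour through a single maximal $Q$ and the semistable quotient $\ad(E_G)|_U/\fP$. As you note yourself, it only gives the non-strict inequality.

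In the character step, $\chi$ must be specifically $\det(\ad_P)$, as in the paper. Then $E_P(\chi)\cong\det\fP$, so $\deg(\fP)=0$ gives $\deg(E_P(\chi))=0$. For a general dominant $\chi$, the condition $\deg(\fP)=0$ says nothing. With this choice, your expansion of $\chi^{-1}=\det(\fg/\fp)$ as a strictly positive rational combination of the characters $\det(\fg/\mathfrak{q})$ is correct. Here $Q$ runs over the maximal parabolics $Q\supseteq P$, and each $E_P\times_P Q$ is still $\vphi$-compatible. This spells out what the paper only asserts through the Higgs analogue of \cite[Proposition 1.4]{AB01}.

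The real omission is the last step. The parabolic subalgebra bundle of Lemma \ref{existence of parabolic subalgerba bundle} is built from $S$ over the big open set $U$, as in \cite[Proposition 2.12]{AB01}. So it can only be proper when $S|_U\neq\ad(E_G)|_U$, i.e. when $\rank(S)<\rank(\ad(E_G))$. The bare assumption $S\neq\ad(E_G)$ does not rule out that the two differ only in codimension $\ge 2$. What the contradiction actually gives is therefore $(\ad(E_G),\vphi)|_U\cong(S,\vphi_S)|_U$. You still have to obtain the polystable decomposition on all of $X$.

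The paper does this by noting that the $\cH om$ sheaves between the reflexive sheaves $\ad(E_G)$, $S$ and their twists by $L$ are normal. Hence the isomorphism over $U$, together with its compatibility with $\vphi$, extends across $X\setminus U$.

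Alternatively, you can show that the socle of the locally free sheaf $\ad(E_G)$ is saturated:
\begin{enumerate}
\item Its saturation $\bar S$ is $\vphi$-invariant and reflexive.
\item By semistability, $\deg(\bar S)=\deg(S)$, so $\bar S$ agrees with $S$ outside a closed subset of codimension $\ge 2$.
\item Hence $\bar S\cong S^{\vee\vee}$, which is again polystable of slope $0$.
\item So $\bar S=S$ by maximality of the socle.
\end{enumerate}
Then $S\neq\ad(E_G)$ does force $\rank(S)<\rank(\ad(E_G))$. One of these two arguments needs to be added.
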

\begin{proof}
Assume that $(E_{G},\vphi)$ is stable. By Lemma \ref{lemm:adHiggsss}, $(\ad(E_{G}),\vphi)$ is semistable. Suppose that $(\ad(E_{G}),\vphi)$ is not polystable. Then the socle $(S,\vphi_{S})$ is properly contained in $(\ad(E_{G}),\vphi)$. By Lemma \ref{existence of parabolic subalgerba bundle}, there exists an open subset $U\subseteq X$, with $\codim(X\setminus U)\ge 2$ and a parabolic subalgebra bundle $\fP$ of $\ad(E_{G})$ over $U$ with $\vphi\in H^{0}(X,\fP\otimes L)$ and $\deg\fP=0$.

Then by Lemma \ref{lemm:parabolic reduction}, this parabolic subalgebra bundle defines a parabolic reduction $\sigma :U\to E_{G}/P$ of the structure group of $E_{G}$ to the parabolic subgroup $P$ and we have $\fP\cong \sigma^{*}(E_G \times_P \fp)$. Let $\chi:=\det(\ad_{P})$ be the determinant of the adjoint action of $P$ on its Lie algebra $\fp$. This character $\chi$ is dominant. On the other hand we have
$$\deg(E_{P}(\chi))=\deg(E_{P}(\fp))=\deg(\fP)=0.$$
This contradicts the stability of $(E_{G},\vphi)$. Consequently, $(\ad(E_{G}),\vphi)$ coincides with its socle $(S,\vphi_{S}):=\displaystyle \bigoplus_{i=1}^{k}(\cF_{i},\vphi_{\cF_{i}})$ over $U$, i.e.,
$$(\ad(E_{G}),\vphi)|_{U}\cong\bigoplus_{i=1}^{k}(\cF_{i},\vphi_{\cF_{i}})|_{U},$$
where $\cF_{i}$ are locally free and $(\cF_{i},\vphi_{\cF_{i}})|_{U}$ is a stable subpair of $(\ad(E_{G}),\vphi)|_{U}$. Since $\ad(E_{G})$ and $S$ are reflexive, $\cH om(\ad(E_{G}),S)$, $\cH om(\ad(E_{G}),S\otimes L)$, $\cH om(\ad(E_{G}),\ad(E_{G}))$, $\cH om(S,\ad(E_{G}))$, $\cH om(S,\ad(E_{G})\otimes L)$ and $\cH om(S,S)$ are all normal by \cite[Proposition 5.5.21, 5.5.23]{Ko87}, where a coherent sheaf $\cG$ on $X$ is said to be normal if for every open set $V$ in $X$ and every analytic subset $A$ of $V$ of codimension at least $2$, the restriction $\Gamma(V,\cG)\to\Gamma(V \setminus A,\cG)$ of sections is an isomorphism. So the following commutative diagram over $U$
$$\xymatrix{\ad(E_{G})|_{U}\ar[r]^{\cong}\ar[d]_{\vphi}&S|_{U}\ar[d]^{\vphi_{S}}\\ \ad(E_{G})\otimes L|_{U}\ar[r]^{\cong}&S\otimes L|_{U}}$$
extends over $X$. Thus the above holomorphic decomposition of $(\ad(E_{G}),\vphi)|_{U}$ extends to $X$, that is, $(\ad(E_{G}),\vphi)$ is polystable.
\end{proof}

\section{Application: Moduli stack of $L$-valued Higgs G-sheaves and the perfect obstruction theory.}\label{sect:POT}

In this section, we will assume that $X$ is a smooth projective variety. It is well known that the moduli stack of principal $G$-bundles on $X$, $\Bun_G(X)$ is an algebraic stack. Let $U_G$ be the universal $G$-bundle over $\Bun_G(X) \times X$. Then we can also consider the universal adjoint bundle $\ad(U_G) := U_G \times_G \fg$. Next, we consider the moduli stack of $L$-valued $G$-Higgs fields. We can consider the stack associated to the coherent sheaf $(p_1)_* \left( \ad(U_G) \otimes p_2^*L \right)$ where $p_1, p_2$ are the projections from $\Bun_G(X) \times X$ to $\Bun_G(X)$ and $X$, respectively. (See \cite[Section 3]{CL12} and \cite[Section 3]{KMMP25} for the definition of the stack associated to the coherent sheaf.) We denote this stack by $\Higgs_L(X)$.

Then, this stack is an algebraic stack, locally of finite presentation over $\Bun_G(X)$, with quasi-affine diagonal over $\Bun_G(X)$ by \cite[Theorem 4]{HR14}. Now, we consider the stable locus of the moduli space of $L$-valued $G$-Higgs fields, $\Higgs_L(X)^s$. Then we claim that it is a DM stack. 

\begin{prop}
When $G$ is semisimple, the stability condition in Definition \ref{def:stability of VW G-bundle} is an open condition. That is, $\Higgs_L(X)^s$ is an open substack of the stack of $L$-valued $G$-Higgs fields, $\Higgs_L(X)$.
\end{prop}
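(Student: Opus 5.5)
Our plan is to reduce openness of stability for $(E_G,\vphi)$ to openness of a stability condition for the associated adjoint $L$-valued Higgs bundle $(\ad(E_G),\vphi)$, where openness is a standard consequence of boundedness and the Grothendieck/Quot-scheme argument. Since $G$ is semisimple, $\fz(\fg)=0$ and the Killing form gives $\ad(E_G)\cong\ad(E_G)\dual$, so $\deg \ad(E_G)=0$. In this case stability of $(E_G,\vphi)$ can be rephrased in terms of $\vphi$-invariant saturated subsheaves of $\ad(E_G)$ that are, over a big open set, bundles of parabolic subalgebras.

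\textbf{Step 1.} Using Lemma \ref{lemm:reduction} we may assume $G$ is adjoint, so that $\ad:G\to GL(\fg)$ is faithful. By the argument in the first half of the proof of Lemma \ref{lemm:adHiggsss}, together with Lemmas \ref{lemm:parabolic reduction} and \ref{existence of parabolic subalgerba bundle}, we will show that $(E_G,\vphi)$ is \emph{not} stable exactly when there is a $\vphi$-invariant saturated subsheaf $\cF\subset\ad(E_G)$ that is generically a parabolic subalgebra bundle and satisfies $\deg\cF\ge 0$. The subsheaves arising from maximal parabolic reductions lie in finitely many types, indexed by the conjugacy classes of maximal parabolics, and each type has fixed rank $\dim\fp$. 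By Theorem \ref{thm:stabcompare} and Lemma \ref{lemm:adHiggsss}, such an $\cF$ has slope at least $\mu(\ad E_G)$. So the destabilizing subsheaves of $\ad(E_G)$ in the relevant family form a bounded family, by Grothendieck's lemma: saturated subsheaves of a fixed bounded family of sheaves with slope bounded below are bounded.

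\textbf{Step 2.} Let $T\to \Higgs_L(X)$ be a finite type atlas. Consider the relative Quot scheme of quotients $\ad(U_G)_T\surra \cQ$ with Hilbert polynomials in the finite set allowed by Step 1. Inside it, the conditions that the kernel is $\vphi$-invariant and is generically a Lie subalgebra of parabolic type are closed, respectively locally closed and constructible. Since relative Quot schemes are projective over $T$, the image of this locus in $T$ is closed, and it is exactly the non-stable locus. Its complement is therefore open, and it descends to an open substack $\Higgs_L(X)^s$.

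The main obstacle is Step 1. It must turn the condition "reduction over a big open set $U$ for a maximal parabolic" into an algebraic condition on subsheaves of $\ad(E_G)$ on all of $X$, with controlled Hilbert polynomials rather than just controlled degree. For this we would take the saturation of the extension of $\ad(E_P)$, which is uniquely determined by its restriction to $U$, and use the $\deg\ge$ inequality together with the $\mu_{\max}$ bound from Proposition \ref{mumin and mumax} to bound the family. We also need to check that the Lie-subalgebra and parabolicity conditions are closed in families. Subalgebra-ness is the vanishing of the map $f:\cF\otimes\cF\to\ad(E_G)/\cF$ as in Lemma \ref{lemm:adHiggsss}, which is closed. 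Parabolicity is open in the variety of subalgebras of fixed dimension $\dim\fp$, since parabolic subalgebras form a union of components of the closed subvariety of $\Gr(\dim\fp,\fg)$. If this is delicate, we would fall back on the formulation $\deg\sigma^*T_{E_G/P}\le 0$ and use the boundedness of sections of the flag bundle $E_G/P$ instead.
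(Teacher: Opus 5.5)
Your overall strategy matches the paper's. You translate non-stability into the existence of a $\vphi$-invariant saturated subsheaf $\cF\subset\ad(E_G)$ with $\deg\cF\ge 0$ that is generically a bundle of parabolic subalgebras of a fixed type. You bound these subsheaves by Grothendieck's lemma (the paper cites Simpson) and take the image of a locus in a relative Quot scheme that is projective over the base.

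\textbf{The gap is in Step 2.} You assert only that the parabolic-type condition is locally closed (or constructible) in the Quot scheme. You then conclude that its image in $T$ is closed because the Quot scheme is projective over $T$. This does not follow. A proper morphism sends closed sets to closed sets, but the image of a locally closed set is in general only constructible. For example, $\{(p,t): t\neq 0\}\subset \PP^1\times\CC$ maps onto $\CC\setminus\{0\}$. So you have not shown that the non-stable locus is closed. Your remark that parabolicity is \emph{open} in the variety of subalgebras of dimension $\dim\fp$ points the wrong way: the argument needs closedness.

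\textbf{The missing idea} is that the subspaces of $\fg$ conjugate to $\fp$ form a single orbit $G\cdot\fp\subset\Gr(\dim\fp,\fg)$. This orbit is the image of the injective orbit map $G/P\to\Gr(\dim\fp,\fg)$, $gP\mapsto\Ad(g)\fp$. Since $G/P$ is projective, the orbit is closed. Hence the relative locus $\Gr(\fp,\ad(E_G))\cong E_G/P$ is closed in the Grassmann bundle. By flatness of the universal quotient, the condition that the kernel is generically of type $\fp$ is then closed on the Quot scheme.

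The paper makes this precise in Lemma \ref{lemm:paraclosed}. It writes $\Gr(\fp,\fE)$ as the common zero locus of finitely many Plücker-type equations. It then imposes vanishing of the induced maps from $\Sym^i(\wedge^r\wtil{\cF})$ of the universal kernel to $\cI_i\dual$. Vanishing on a big open set is equivalent to vanishing everywhere because the target is torsion free. Only with this closedness does the proper-image step give a closed destabilizing locus.

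Two smaller points:
\begin{itemize}
\item Once closedness is in place, your separate Lie-subalgebra condition is superfluous, since conjugates of $\fp$ are automatically subalgebras.
\item Your appeal to Theorem \ref{thm:stabcompare} for the slope bound is misplaced, because that theorem presupposes stability. The bound $\mu(\cF)\ge 0=\mu(\ad(E_G))$ follows directly from $\deg\cF\ge 0$ and the Killing-form isomorphism $\ad(E_G)\cong\ad(E_G)\dual$.
\end{itemize}
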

\begin{proof}
We will first show that the stability condition for $G$-bundles is an open condition. When $\vphi=0$, the stability condition in Definition \ref{def:stability of VW G-bundle} is reduced to the stability of principal $G$-bundles.

Let $S$ be a scheme of finite type over $\CC$ and let $E_G \to X \times S$ be a flat family of principal $G$-bundles. We will show that the locus $S^{\text{st}} = \{s \in S \mid E_G|_s \text{ is stable}\}$ is an open subset of $S$. $E_G|_s$ is not stable if and only if there exists a maximal parabolic subgroup $P \subset G$ and a reduction $\sigma: U \to E_s/P$ on an open set $U$ with $\text{codim}(X \setminus U) \ge 2$, such that:
$$ 
\deg \, \sigma^*(T_{E_s /P}) \le 0.
$$

On the other hand, the reduction $\sigma$ to the parabolic subgroup $P$ over $U$ is equivalent to a vector subbundle $\mathfrak{F}_\sigma \subset \mathfrak{E}_s|_U$ whose fibers are conjugate to the parabolic Lie algebra $\mathfrak{p} \subset \mathfrak{g}$, where $\mathfrak{E}_s = (E_G|_s ) \times_G \fg$ by Lemma \ref{lemm:genericpara}.

Note that we have $\sigma^*(T_{E_s/P}) \cong (\fE_s|_U) / \fF_\sigma$. Since the group $G$ is semisimple, the Killing form on the Lie algebra $\fg$ gives an isomorphism $\fE_s \cong \fE_s^{\vee}$. Thus $\deg \fE_s = 0$. Therefore, $\deg \, \sigma^*(T_{E_s /P}) \le 0$ if and only if $\deg \fF_\sigma \ge 0$.

Because $X$ is a smooth (hence normal) projective variety and $\text{codim}(X \setminus U) \ge 2$, the vector subbundle $\fF_\sigma$ extends uniquely to a saturated, torsion-free coherent subsheaf $\cF \subset \fE_s$ on $X$. Then we have $\deg(\cF) = \deg(\fF_\sigma) \ge 0$. Note that $\cF$ is saturated since $(\fE_s/\cF)|_U \cong (\fE_s|_U)/\fF_\sigma$ is locally free.

Therefore, $E_G|_s$ is not stable if and only if its adjoint bundle $\fE_s$ contains a saturated coherent subsheaf $\mathcal{F}$ such that $\deg(\cF) \ge 0$ and over a big open set $U$ (codimension of $X\setminus U \ge 2$) each fiber of $\cF|_U \subset \fE_s|_U 
$ is a parabolic subalgebra conjugate to $\mathfrak{p} \subset \fg$.

By \cite[Theorem 1.1, Proposition 1.8]{Sim94},
for the family of vector bundles $\fE$ over $X \times S$, flat over $S$, the set of all saturated subsheaves of $\fE$, $s \in S$ whose degrees are bounded from below ($\deg \ge 0$) forms a bounded family, denoted by $Y^{\text{dst}}$.

Consequently, there are only finitely many possible Hilbert polynomials $\Phi_1, \dots, \Phi_k$ that the quotient sheaf $\mathcal{Q} = \fE_s / \mathcal{F}$ can have, where $\cF$ is an element of the above family $Y^{\text{dst}}$. Because the set of possible Hilbert polynomials is finite, we can parameterize the entire search space algebraically. We construct the relative Quot scheme:$$ \mathcal{Q}uot := \coprod_{i=1}^k \text{Quot}_{X \times S / S}(\fE, \Phi_i) $$
which parameterizes all coherent quotients $q: \fE_s \twoheadrightarrow \mathcal{Q}$ possessing one of the destabilizing Hilbert polynomials $\Phi_1,\dots,\Phi_k$. Because $X$ is a projective variety,
the structure morphism $$ \pi: \mathcal{Q}uot \to S $$is a projective (and therefore proper) morphism by \cite[Theorem 2.2.4]{HT10}.

On the other hand, $\cQ uot$ parametrizes all subsheaves $\mathcal{F} = \ker(q)$. We want to consider the sublocus of $\cQ uot$ where $\mathcal{F}$ is isomorphic to the adjoint bundle of the parabolic reduction of $\cE_s$ over a big open set $U$. (That is, there exists a reduction $E_P$ of $E_G|_U$ for a parabolic subgroup $P \subset G$ such that $\cF|_U \cong E_P \times_P \fp$.)

We consider the following parabolic type condition on $\mathcal{Q} uot$. Let $[0 \to \cF \to \fE_s \to \cQ \to 0] \in \cQ uot$ be an element. The parabolic type condition is that over a big open set $U \subset X$, $\cF|_x \subset \fE_s|_x \cong \fg$ is a parabolic subalgebra conjugate to $\mathfrak{p}$. 

Inside the Grassmannian bundle $\operatorname{Gr}(r, \fE)$, the locus of subspaces that are parabolic subalgebras conjugate to $\mathfrak{p}$ forms a closed subvariety whose fiber over each $x \in U$ is isomorphic to $G/P$. It can be shown as follows. For each closed point $s \in S$ and a closed point $x \in U$ we can consider an orbit map $G \to \Gr(r,\fE_s|_x)$ defined by:
\[
\vphi: g \mapsto g \fp g^{-1}.
\]
Then we can easily see that the stabilizer of this orbit map is isomorphic to $P \subset G$. Therefore, there is an injective morphism $\bar{\vphi}: G/P \to \Gr(r,\fE_s|_x)$. Since $G/P$ is a generalized flag variety, it is projective. Therefore, $\bar{\vphi}$ is a closed immersion, so that the locus of subspaces that are parabolic subalgebras conjugate to $\fp$ is a closed subvariety and we denote it by $\Gr(\fp ,\fE)$. Note that it is in fact isomorphic to $E_G/P$. 

We claim that the condition that the generic fiber of $\cal{F}$ is contained in this $\Gr(\fp,\fE_s)$ gives us a closed condition by Lemma \ref{lemm:paraclosed}. Therefore, for each $\Phi_i$, there is a closed subscheme $\cQ_{\fp, \Phi_i} \subset \mathcal{Q} uot$ parametrizing the subsheaves $\cF \subset \fE_s$ such that over a certain big open set $U \subset X$, $\cF|_U$ is a vector subbundle of $\fE_s|_U$ and $\fF_{\sigma}|_x \subset (\cE_s)_x$ is isomorphic to a parabolic subalgebra conjugate to $\fp$ for all $x \in U$. Since $\cQ uot$ is a projective scheme, so is $\cQ_{\fp,\Phi_i}$. Therefore, the projection $\pi : \cQ_{\fp,\Phi_i} \to S$ is also projective (thus it is proper). Hence $\pi(\cQ_{\fp,\Phi_i})$ is a closed subscheme of $S$.

Note that there are only finitely many conjugacy classes of parabolic subalgebras when we fix the maximal torus of $T \subset G$ and the corresponding Cartan subalgebra $\fh \subset \fg$, which corresponds to a finite subset of the set of simple roots. Let $\fp_1,\dots,\fp_\ell$ be the collection of all parabolic subalgebras. 

Therefore, $S^{\text{dst}}:= \displaystyle \bigcup_{\fp_j,\Phi_i} \pi(\cQ_{\fp_j,\Phi_i}) \subset S$ is the locus where $\fE_s$ admits a destabilizing subsheaf $\cF$ whose restriction on a big open set $U \subset X$ is isomorphic to the adjoint of a principal $P$-bundle $E_P \subset E_G|_U$ for a parabolic subgroup $P \subset G$ by Lemma \ref{lemm:genericpara}. Therefore, by the above argument, the sublocus $S^{\text{st}}$ of $S$ where $E_G|_s$ is stable is equal to an open subset $S \, \setminus \, S^{\text{dst}}$. 

Next we will generalize the above proof to the general case, where the Higgs field is not necessarily zero. Over $X \times S$, we consider a flat family of Higgs $G$-sheaves $(E_G,\vphi)$, $\vphi \in H^0(X \times S, \fE \otimes L)$ where $\fE=E_G \times_G \fg$. We have the induced family of adjoint vector bundles with $L$-valued Higgs fields, $(\fE,\vphi)$, $\vphi : \fE \to \fE \otimes L$ defined by taking Lie bracket on the right. 

Then, in Lemma \ref{lemm:paraclosed}, we will modify the subscheme $Z_\fp$ by the sublocus satisfying the additional condition, that is, $\vphi(\cF) \subset \cF \otimes L \subset \fE_s \otimes L$. This condition is interpreted as follows. From the universal sequence, we can consider the universal morphism, defined as the following composition:
\[
v: \wtil{\cF} \stackrel{\wtil{\vphi}}{\lra} q_{X \times S}^*(\fE \otimes L) \to q_{X \times S}^*(\fE \otimes L)/(\wtil{\cF}\otimes L)  
\]
where $q_{X \times S} : \Quot_{X \times S/S}(\fE,\Phi_i) \times_S (X \times S) \to (X \times S)$ is the projection and $\wtil{\vphi}$ is the morphism by taking Lie bracket for the Higgs field on the right. Then we can consider a new subscheme $Z'_\fp \subset Z_\fp$ which is the zero locus of $v$, which is again closed. Then the above proof for the case when the Higgs field is zero, directly extends to the general case. 
\end{proof}

\begin{lemm}\label{lemm:genericpara} Let $\fE:= E_G \times_G \fg$ be an adjoint of a principal $G$-bundle over $X$ and let $\cF \subset \fE$ be a subsheaf whose fiber over any point $x$ in a big open set $U \subset X$, $\cF|_x$ is a parabolic subalgebra of $\fE|_x \cong \fg$ which is conjugate to $\fp$. Then there exists a $P$-bundle $E_P \subset E_G|_U$ such that $\cF|_U \cong E_P \times_P \fp$.
\end{lemm}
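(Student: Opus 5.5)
The plan is to build $E_P$ directly as a subset of $E_G|_U$, using the standard fact that a parabolic subgroup is self-normalizing and is the normalizer of its own Lie algebra: $N_G(\fp)=P$. Recall that a point $y\in (E_G)_x$ is a $G$-equivariant identification $\fg \xrightarrow{\sim} \fE_x$, $\xi\mapsto [y,\xi]$. Define
\[
E_P := \set{ y \in E_G|_U \, | \, [y,\fp] = \cF|_{\pi(y)} } \subset E_G|_U .
\]
If $y\in E_P$, then $yg\in E_P$ exactly when $\Ad(g)\fp=\fp$, i.e.\ when $g\in N_G(\fp)=P$. Moreover the fiber $E_P\cap (E_G)_x$ is nonempty: the hypothesis says $\cF|_x=[y_0,\Ad(g)\fp]$ for some $y_0$ and $g$, so $y_0g$ lies in it. Hence $P$ acts on each fiber of $E_P\to U$ simply transitively.

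Next I will show that $E_P$ is a holomorphic principal $P$-subbundle. Consider the $G$-equivariant map
\[
\Psi: E_G|_U \to \Gr(r,\fg), \qquad y \mapsto y^{-1}(\cF|_{\pi(y)}),
\]
where $r=\dim\fp$. This map is holomorphic because $\cF|_U$ is a holomorphic subbundle of $\fE|_U$. Over $U$, its image lies in the closed $G$-orbit $G\cdot\fp\cong G/P$ (this is the closed immersion $\bar\vphi$ constructed in the proof above), and $E_P=\Psi^{-1}([\fp])$. Equivalently, $\Psi$ descends to a holomorphic section
\[
\sigma: U \to E_G\times_G (G/P)=E_G/P,
\]
and $E_P$ is the pullback under $\sigma$ of the principal $P$-bundle $E_G\to E_G/P$. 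This standard correspondence gives local holomorphic triviality of $E_P$ immediately, so $E_P$ is a holomorphic reduction of structure group to $P$ over $U$.

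Finally, the map $E_P\times_P\fp\to\fE|_U$, $[y,\xi]\mapsto[y,\xi]$, is well defined and injective, since it is the restriction of the isomorphism $E_P\times_P\fg\cong E_G|_U\times_G\fg$. By the definition of $E_P$ its image on each fiber is exactly $\cF|_x$, so $\cF|_U\cong E_P\times_P\fp$.

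The main obstacle is the locally-free input. I will use that $\cF|_U$ is a genuine subbundle: $\cF$ is assumed to have parabolic fibers at every point of $U$, so its fiber dimension is constant, and after shrinking $U$ by a codimension $\geq 2$ set if needed (as in the preceding proof), $\cF|_U$ is a subbundle. I will also use the group-theoretic fact $N_G(\fp)=P$, which holds for connected $G$ and follows from the self-normalizing property of parabolic subgroups.
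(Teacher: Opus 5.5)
Your argument is correct and is essentially the paper's proof. In both, $\cF|_U$ defines a section of $\Gr(r,\fE)$ landing in $\Gr(\fp,\fE)\cong E_G/P$, and $E_P$ is the pullback of $E_G\to E_G/P$; you just make explicit the frame description of $E_P$, the fact $N_G(\fp)=P$, and the final isomorphism, all of which the paper leaves to the reader. (No shrinking of $U$ is needed: the image of $\cF$ in $\fE$ has constant fiber dimension on $U$, so $\fE/\cF$ is locally free there and $\cF|_U$ is already a subbundle.)
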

\begin{proof}
By the assumption, $\cF|_U$ gives a section $s : U \to \Gr(r,\fE)$ where $r = \dim_{\CC} \fp$. Moreover, the image of $s$ lies in $\Gr(\fp,\fE|_U) \cong (E_G/P)|_U$. This is a reduction of $E_G|_U$, where $E_P$ is defined by the pull-back of the principal $P$-bundle $E_G \to E_G/P$. We can easily check that $\cF|_U \cong E_P \times_P \fp$. 
\end{proof}

\begin{lemm}\label{lemm:paraclosed}
Inside the relative quot scheme $\cQ=\Quot_{X\times S/S}(\fE,\Phi_i)$, consider the sublocus of quotients $q: \fE_s \to \cQ$ where the kernel $\cF := \ker \, q$ satisfies the \textbf{parabolic condition} that over a big open set $U \subset X$, $\cF|_U$ is locally free and each fiber $\cF_x \subset \fE_s|_x \cong \fg$ is a Lie subalgebra which is a conjugate to a fixed parabolic subalgebra $\fp$ for each $x \in U$. Let us denote this sublocus by $Z_{\fp}$. Then $Z_{\fp}$ is a closed subscheme of $\Quot_{X\times S/S}(\fE,\Phi_i)$.
\end{lemm}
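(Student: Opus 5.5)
We will show $Z_\fp$ is closed by expressing the parabolic condition as a pullback of the closed subvariety $\Gr(\fp,\fE)\subset \Gr(r,\fE)$, where $r=\dim\fp$, along the universal family. We will argue it is closed under specialization and cut out by closed conditions.

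Let $0\to\wtil\cF\to q_{X\times S}^*\fE\to\wtil\cQ\to 0$ be the universal sequence on $\cQ\times_S(X\times S)$. By generic flatness and the semicontinuity of fiber dimension, the locus $W$ of points $(t,x)$ where $\wtil\cQ$ fails to be locally free of rank $\rank\fE-r$ is closed. On its complement $V$, the subsheaf $\wtil\cF$ is a subbundle, so we get a classifying morphism $s:V\to\Gr(r,\fE)$. The preimage $s^{-1}(\Gr(\fp,\fE))$ is closed in $V$. We already know $\Gr(\fp,\fE)$ is closed, since it is the image of the proper map $E_G/P\to\Gr(r,\fE)$ described in the proof of the Proposition.

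We next reformulate the parabolic condition on a point $t\in\cQ$, not just on some $U$. The condition is that the open set $V_t=V\cap(\{t\}\times X)$ is big and $s(V_t)\subset\Gr(\fp,\fE_s)$. Whenever $V_t$ is nonempty it is dense in $X$, because $X$ is irreducible. Also $\Gr(\fp,\fE)$ is closed, so the second condition is equivalent to the generic point of $X_t$ mapping into $\Gr(\fp,\fE)$. Two cases appear.

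\textbf{Case 1: the bigness condition.} Since $\cF$ is torsion-free, we would like $\codim W_t\ge2$ to be automatic. But a quotient with torsion in codimension one would violate it. We plan to handle this by working only with saturated $\cF$, as in the boundedness step of the Proposition. If instead we want closedness within the whole Quot scheme, we will use the following: on a closed subvariety of $\cQ$, a limit of saturated parabolic subsheaves $\cF_{t}$ has an inclusion $\cF_{t_0}\subset\cF_{t_0}^{sat}$ with the same rank and equal degree plus the torsion length. The stratification by this torsion is given by upper semicontinuity of $\dim\mathrm{Ext}$ or of fiber dimensions, and it reduces the question to the generic fiber.

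\textbf{Case 2: the main obstacle.} The hard part is showing that the set of $t$ where $s$ sends $V_t$ into $\Gr(\fp,\fE)$ is closed, even though $V$ itself varies. We will first try to avoid $V$ altogether by using algebraic equations. Consider the composed map
\[
\Lambda^r\wtil\cF\to\Lambda^r q^*\fE,
\]
which is the Plücker embedding. It is defined on all of $\cQ\times_S(X\times S)$. The closed subvariety $\Gr(\fp,\fE)$ is cut out inside the Plücker space by homogeneous polynomial equations $f_j$. Each $f_j$ gives a section of a sheaf of the form $\Hom((\Lambda^r\wtil\cF)^{\otimes d},\cal{L}_j)$, with $\cal{L}_j$ built from $\fE$. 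The parabolic condition on $V_t$ is then equivalent to these sections vanishing on $X_t$, modulo torsion, and the vanishing extends over the big set.

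Finally, we will use that the locus in $\cQ$ where a morphism of coherent sheaves on $\cQ\times_S X$ vanishes fiberwise is closed, by properness of $X$ and the standard zero-locus argument for homomorphisms to flat sheaves (as in the construction of $Z'_\fp$). Here the target flat sheaf is the image of the map into a locally free sheaf. This shows that $Z_\fp$ is closed.
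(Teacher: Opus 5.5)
Your Case 2 is essentially the paper's proof. The paper pulls the equations of $\Gr(\fp,\fE)\subset\PP(\wedge^r\fE)$ back along $u_i\colon \Sym^i(\wedge^r\wtil\cF)\to\Sym^i(\wedge^r q_{X\times S}^*\fE)\to q_{X\times S}^*\cI_i\dual$. It uses that $\cH om(\Sym^i(\wedge^r\cF),\cI_i\dual)$ is torsion free to pass from vanishing on $U$ to vanishing on $X$, and it takes the common zero locus of the $u_i$. In the zero-locus step, use this locally free target itself, which is flat over $\cQ$; the image of a map into it need not be flat.

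The genuine gap is your Case 1, and it cannot be filled, because the condition you isolate there is open, not closed. In your own setup, $\{t : \dim(W_t)\le \dim(X)-2\}$ is open by upper semicontinuity of fibre dimension for the proper map $W\to\cQ$, and it really does fail under specialization. For example, let $X=\PP^2$, let $E_G$ be the trivial $\SL_2$-bundle, and let $a,c$ be independent linear forms. Let $\cF_t\subset\fE$ be the saturated Borel subalgebra sheaf of the line subsheaf $\mathcal{O}(-1)\xrightarrow{(a,\,\lambda a+tc)}\mathcal{O}^{2}$.
\begin{itemize}
\item For $t\neq 0$, $\cF_t$ is a Borel subalgebra bundle off the point $\{a=c=0\}$ with $c_1(\cF_t)=-2H$, so it lies in $Z_\fp$.
\item Its flat limit $\cF_0\in\cQ$ has generic fibre a constant Borel subalgebra $\mathfrak{b}$. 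The saturation $\mathfrak{b}\otimes\mathcal{O}_X$ has $c_1=0$, while $c_1(\cF_0)=-2H$.
\item Hence $\fE/\cF_0$ has torsion along the line $\{a=0\}$, and $\cF_0$ is not a subbundle on any big open set.
\end{itemize}
So $Z_\fp$ as literally stated is not closed. Restricting to saturated $\cF$ is not available, since torsion-freeness of the quotient is an open condition on $\cQ$. Stratifying by torsion only locates such points; it does not exclude them. Your last sentence therefore proves closedness of a strictly larger set.

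What your Case 2, and the paper's argument, actually prove is that the common zero locus of the $u_i$ is closed. This is the set of $t$ for which the generic fibre of $\cF_t$ is conjugate to $\fp$, equivalently for which the saturation $\cF_t^{\mathrm{sat}}$ satisfies the parabolic condition. The paper's converse step (``$\cF$ is torsion free, hence locally free on a big open set $U$, hence its fibres over $U$ are parabolic'') passes over exactly the codimension-one torsion in $\fE_s/\cF$ that you flagged. So the missing step is not supplied there either, and the lemma should be stated for this larger locus.

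That version is all the openness argument needs. The degree $\deg(\cF)$ is determined by $\Phi_i$, saturation preserves parabolic type and $\vphi$-invariance, and $\deg(\cF_t^{\mathrm{sat}})\ge\deg(\cF_t)\ge 0$. Hence the image in $S$ of the corrected closed locus is still exactly $S^{\mathrm{dst}}$.
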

\begin{proof}
From the inclusion $\cF = \ker \, q \subset \fE_s$, there is a morphism $\wedge^r \cF \subset \wedge^r \fE_s$, where $r = \dim_\CC \fp$. As we have seen above, $\Gr(\fp,\fE_s|_x)$ is a closed subscheme of $\Gr(r,\fE_s|_x)$ and therefore, it is a closed subscheme in the Pl\"ucker embedding $\PP(\wedge^r \fE_s|_x)$. Then, $\Gr(\fp, \fE_s|_x)$ is defined by an ideal $I$ of the projective coordinate ring of $\PP(\wedge^r \fE_s|_x)$, and we may assume that $I$ is generated by homogeneous elements of degrees up to $d$, namely $I_1,\dots, I_d$. In other words, there are sections $s_i \in \Hom( \Sym^i (V), (I_i)\dual)$ where $V:=\wedge^r (\fE_s|_x)$ such that $\Gr(\fp, \fE_s|_x)$ is the zero locus of $s_1,\dots,s_d$.

From this fiberwise observation, we conclude that $\Gr(\fp, \fE) \subset \Gr(r, \fE)$ is the zero locus of the sections $s_i \in \Hom(\Sym^i(\wedge^r \fE), \cI_i\dual)\cong H^0\left( \Gr(r,\fE), \Hom_{\Gr(r,\fE)}(\Sym^i(\wedge^r \fE), p^*\cI_i\dual) \right)$. 

Over $\Quot_{X \times S/S}(\fE,\Phi_i) \times_S (X \times S)$, there is a universal sequence 
$$0 \to \wtil{\cF} \to q_{X\times S}^* \fE \to \wtil{Q} \to 0$$
where $q_{X\times S} : \Quot_{X \times S/S}(\fE,\Phi_i) \times_S (X \times S) \to (X \times S)$ is the projection. Consider the composition of the morphism
\[
\Sym^i(\wedge^r \wtil{\cF}) \to \Sym^i(\wedge^r q_{X\times S}^*\fE ) \to q_{X\times S}^* \cI_i\dual
\]
and denote it by $u_i$. The morphism $u_i$ is an element of $H^0(\cQ, \Hom_\cQ(\Sym^i(\wedge^r \wtil{\cF}), q_{X\times S}^* \cI_i\dual))$ 
where $\cQ:= \Quot_{X \times S/S}(\fE,\Phi_i)$. Let us show that $Z_\fp$ is a common zero locus of $u_1, \dots, u_d$. For $s \in S$, if $\cF \subset \fE_s$ satisfies the parabolic condition, then by the construction, $u_i|_{[\fE_s \surra \fE_s/\cF]} \in H^0(X,\cH om( \Sym^i (\wedge^r \cF), \cI_i\dual ) )$ vanishes over a big open set $U \subset X$. Since $\cI_i\dual$ is locally free, $\cH om(\Sym^i (\wedge^r \cF), \cI_i\dual)$ is torsion free. Hence $u_i|_{[\fE_s \surra \fE_s/\cF]} \in H^0(X,\cH om(\Sym^i (\wedge^r \cF), \cI_i\dual) ) = 0$ over the entire $X$. 

Conversely, assume that an element $[\fE_s \surra \fE_s/\cF]\in \cQ$ satisfies the condition that $u_i|_{[\fE_s \surra \fE_s/\cF]} \equiv 0$ for all $i=1,2,\dots,d$. Since $\cF$ is a subsheaf of the locally free sheaf $\fE_s$, it is torsion free. Therefore, there exists a big open set $U \subset X$ where $\cF|_U$ is locally free. Then for any $x \in U$, $\cF_x \subset \fE_s|_x \cong \fg$ is a parabolic subalgebra conjugate to $\fp$. Therefore, $Z_\fp \subset \cQ$ is indeed the common zero set of the sections $u_1, \dots, u_d$ and hence it is a closed subscheme of $\cQ = \Quot_{X \times S/S}(\fE,\Phi_i)$.
\end{proof}

We are now ready to state and prove the main result of this section.
\begin{prop}
For a semisimple Lie group $G$, $\Higgs_L(X)^s$ is a DM stack.
\end{prop}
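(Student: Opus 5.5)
The plan is to use the standard criterion: an algebraic stack over $\CC$ (characteristic zero) whose diagonal is unramified is Deligne--Mumford. For a stack that is locally of finite type over $\CC$, the diagonal is unramified exactly when every geometric point has a finite and reduced automorphism group scheme. In characteristic zero every group scheme of finite type is reduced by Cartier's theorem. So the task reduces to showing that for every $\CC$-point $(E_G,\vphi)$ of $\Higgs_L(X)^s$, the automorphism group scheme $\mathrm{Aut}(E_G,\vphi)$ has trivial Lie algebra. A reduced group scheme of finite type with zero tangent space at the identity is zero-dimensional, hence finite.

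The first step is algebraicity and the finiteness properties. By the preceding proposition, $\Higgs_L(X)^s$ is an open substack of $\Higgs_L(X)$. The latter is algebraic, locally of finite presentation over $\Bun_G(X)$, and has quasi-affine diagonal over $\Bun_G(X)$ by \cite[Theorem 4]{HR14}. Since $\Bun_G(X)$ is algebraic, locally of finite type, with affine (in particular finite type) diagonal, the stack $\Higgs_L(X)^s$ is algebraic and locally of finite type over $\CC$, with diagonal of finite type. Consequently, for a $\CC$-point $x=(E_G,\vphi)$ the stabilizer $\mathrm{Aut}(x)=x\times_{\Higgs,\Delta}x$ is a group scheme of finite type over $\CC$. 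Concretely, it is the closed subgroup of the affine group scheme $\mathrm{Aut}(E_G)$, namely the sections of $\Ad(E_G)$, fixing $\vphi$.

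The second step computes the tangent space. The Lie algebra of $\mathrm{Aut}(E_G,\vphi)$ consists of the first-order automorphisms over $\CC[\epsilon]/(\epsilon^2)$ restricting to the identity. These are sections $\xi\in H^0(X,\ad(E_G))$ with $[\xi,\vphi]=0$, that is, the space $\cA(\cE)$ of infinitesimal automorphisms. A smooth projective variety is compact K\"ahler, and the Higgs bundle is stable, so Theorem \ref{thm:automorphisms} gives $\cA(\cE)=\fz(\fg)$. Since $G$ is semisimple, $\fz(\fg)=0$. Therefore $\mathrm{Lie}\,\mathrm{Aut}(E_G,\vphi)=0$, and the automorphism group is finite and reduced, i.e.\ a finite étale group scheme over $\CC$.

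The last step is to conclude. The diagonal $\Delta:\Higgs_L(X)^s\to\Higgs_L(X)^s\times\Higgs_L(X)^s$ is of finite type, and its geometric fibers are the groups just considered, which are étale. Hence $\Delta$ is unramified, and by the standard criterion (e.g.\ \cite{HR14}-type references or the Stacks Project) the stack is DM. The main obstacle I expect is the identification of the Lie algebra of the automorphism group scheme with $\cA(\cE)$, which the paper asserts via \cite{Bis09}. This requires checking that a first-order deformation of the identity automorphism of $E_G$ is a section of $\ad(E_G)$, and that preserving $\vphi$ to first order is exactly $[\xi,\vphi]=0$. I would verify this directly using $\ker(G(\CC[\epsilon])\to G(\CC))\cong\fg$ together with the adjoint action on Higgs fields. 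A secondary point is ensuring that the diagonal is quasi-compact and separated enough for the criterion; I would deduce this from the quasi-affine relative diagonal and the affine diagonal of $\Bun_G(X)$.
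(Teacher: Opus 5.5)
Your proposal is correct and follows essentially the same route as the paper: reduce, via the characterization of DM stacks by an unramified diagonal (equivalently, discrete and reduced stabilizers), to the vanishing of the Lie algebra of each stabilizer, which you identify with $\cA(\cE)$ and kill using Theorem \ref{thm:automorphisms} together with $\fz(\fg)=0$. The one step you leave implicit is the passage from arbitrary geometric points to $\CC$-points, which is needed because Theorem \ref{thm:automorphisms} only applies over $\CC$. The paper handles this by upper semicontinuity: the locus where the inertia of $\Higgs_L(X)^s$ fails to be unramified is closed, and if nonempty it contains a point lying over some $\CC$-point, since the stack is locally of finite type over $\CC$.
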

\begin{proof}
By \cite[Theorem 4.6.4]{Alp26}, it is enough to show that every point of $\Higgs_L(X)^s$ has a discrete and reduced stabilizer group. It is enough to show that the sheaf of infinitesimal automorphisms, $H^{-1}(T_{\Higgs_L(X)^s}|_x) = 0$ for every point $x$ of $\Higgs_L(X)^s$ where $T_{\Higgs_L(X)^s}$ is the tangent complex. By the upper semi-continuity, it is enough to show that $H^{-1}(T_{\Higgs_L(X)^s}|_x) = 0$ for every closed point $x$. Let $x = \cE = [E_G, \vphi]$ be an arbitrary closed point of $\Higgs_L(X)^s$. But by Theorem \ref{thm:automorphisms}, the group of infinitesimal automorphisms $\cA(\cE)$ is equal to $\fz(\fg) = 0$ since $G$ is semisimple. 
\end{proof}

Furthermore, when $G$ is semisimple, $X$ is a smooth projective surface, and $L = K_X$, we will show that $\Higgs_L(X)^s$ is equipped with a perfect obstruction theory. By \cite[Theorem 4.1]{HT10}, \ it is known that there is a natural morphism (called deformation-obstruction theory) $\phi_B:   T_{\Bun_G(X)} \to E_B := R (p_1)_* \ad(U_G)[1]$ such that $H^0(\phi_B)$ is an isomorphism and $H^1(\phi_B)$ is injective.  Moreover, by \cite[Proposition 2.5, Proposition 3.1]{CL12}, it is also known that there is a relative perfect obstruction theory $\phi_{H/B}: T_{\Higgs_{K_X}(X) / \Bun_G(X)} \to E_{H/B}:=R (p_H)_* \pi^*(\ad(U_G) \otimes (p_2)^* K_X )$ where $p_H : \Higgs_{K_X}(X) \times X \to \Higgs_{K_X}(X)$ is the projection and $\pi : \Higgs_{K_X}(X) \times X  \to \Bun_G(X) \times X$ is the forgetful morphism.

On the other hand, there is a universal complex 
$$U_{\Higgs} := \left[ \pi^* \ad(U_G) \stackrel{U}{\lra} \pi^* (\ad(U_G) \otimes p_2^* K_X) \right]$$ 
over $\Higgs_{K_X}(X) \times X$, where the morphism $U$ is obtained by taking the Lie bracket on the right of the universal section of $\pi^* ((p_1)_* (\ad(U_G) \otimes p_2^* K_X) )$. Then there is a distinguished triangle 
\[
\pi^* \ad(U_G) \stackrel{U}{\lra} \pi^* (\ad(U_G) \otimes (p_2)^* K_X) \to U_\Higgs[1] \stackrel{+1}{\lra}
\]
Therefore, by taking $R(p_H)_*$, we have the distinguished triangle:
\[
q^*E_B[-1] \to E_{H/B} \to E_H \stackrel{+1}{\lra}
\]
where $q: \Higgs_{K_X}(X) \to \Bun_G(X)$ is the forgetful morphism and $E_H := R(p_H)_* (U_\Higgs [1] )$. Then by the axioms of triangulated categories, there is a morphism $\phi_H : T_{\Higgs_{K_X}(X)} \to E_H$ which completes the morphism between distinguished triangles, from \\ 
$\left[ q^* T_{\Bun_G(X)}[-1] \to T_{\Higgs_{K_X}(X) / \Bun_G(X)} \to T_{\Higgs_{K_X}(X)} \stackrel{+1}{\lra} \right]$ to $\left[ q^* E_B[-1] \to E_{H/B} \to E_{H} \stackrel{+1}{\lra} \right]$.
Then, by a simple direct diagram chase, we can check that $H^{0}(\phi_H)$ is an isomorphism and $H^{1}(\phi_H)$ is injective.

Over the stable locus $\Higgs^s_{K_X}(X)$, we will show that $\phi_H$ is a perfect obstruction theory. By the above argument, it is enough to show that $E_{H} = R(p_H)_* (U_\Higgs [1] )$ is perfect and its cohomology is supported in degrees $0,1$. Since $\phi_{B}$ and $\phi_{H/B}$ are perfect obstruction theories, $\phi_H$ is also perfect by the parallel argument as in \cite[Construction 3.13]{Man12}. Since the cohomologies of $E_B[-1]$ and $E_{H/B}$ are both supported in degrees $0,1,2$, the cohomologies of $E_H$ are supported in degrees $-1,0,1,2$.

Over a closed point $x=[(E_G,\vphi)] \in \Higgs_{K_X}(X)$, we have
\[
E_{H}|_{x} = R(p_x)_*( \, (\ad(E_G) \stackrel{[-,\vphi]}{\lra}  \ad(E_G) \otimes K_X)[1] \, )
\]
where $p_x : \set{x} \times X \to \set{x}$ is the projection. Therefore we have $H^{-1}(E_H|_x) \cong \cA(\cE) =  \fz(\fg) = 0$ by Theorem \ref{thm:automorphisms} when $G$ is semisimple. Moreover, we have
\[
(\ad(E_G) \stackrel{[-,\vphi]}{\lra}  \ad(E_G) \otimes K_X) \simeq ((\ad(E_G) \stackrel{[-,\vphi]}{\lra}  \ad(E_G) \otimes K_X)\dual \otimes K_X)[-1]
\]
Therefore, by Serre duality, we have
\[
H^{2}(E_H|_x) \cong H^{-1}(E_H|_x)\dual = 0.
\]
Then, parallel to the proof of \cite[Lemma 4.2]{HT10}, we conclude that $E_H$ is locally represented by a complex of vector bundles $[F^0 \to F^1]$ in degrees $0$ and $1$. 

Moreover, we have $U_{\Higgs} \cong (U_{\Higgs}[1])\dual \otimes p_2^* K_X$. 
Then the relative Grothendieck-Verdier duality tells us
\[
E_H \cong (E_H)\dual[-1]
\]
in the bounded derived category $D^b( \Higgs_{K_X}(X)^s)$. Therefore, $E_H$ is a symmetric perfect obstruction theory.

\begin{theo}
When the reductive group $G$ is semisimple and $L=K_X$, then the moduli stack of stable $K_X$-valued $G$-Higgs bundles $\Higgs_{L}(X)^s$ is a DM stack equipped with the symmetric perfect obstruction theory $\phi_{H} : T_{\Higgs_{K_X}(X)^s} \to E_H$. 
\end{theo}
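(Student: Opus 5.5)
The plan is to assemble the theorem from two ingredients already established: the DM property of $\Higgs_L(X)^s$ for semisimple $G$, and the construction of $\phi_H$ carried out just before the statement. The DM property is immediate from the preceding proposition, which reduces to Theorem \ref{thm:automorphisms} giving $\cA(\cE)=\fz(\fg)=0$ at every closed point. So the real content is to check carefully that $\phi_H : T_{\Higgs_{K_X}(X)^s}\to E_H$ is a perfect obstruction theory of amplitude $[0,1]$ and that it is symmetric.

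\textbf{Step 1: obstruction-theory properties of $\phi_H$.} I would first obtain $\phi_H$ as a morphism of triangles, filling in the third arrow from the pair $(\phi_B,\phi_{H/B})$. A compatible choice exists because both $\phi_B$ and $\phi_{H/B}$ come from Atiyah-class constructions attached to the universal objects. Applying the five lemma to the long exact cohomology sequences then shows that $H^0(\phi_H)$ is an isomorphism and $H^1(\phi_H)$ is injective, using the known properties of $\phi_B$ and $\phi_{H/B}$. One caveat: $T_{\Bun_G}$ has $H^{-1}$ coming from automorphisms. To avoid this, I would work on the stable locus, where $\Higgs^s$ is DM and $H^{-1}(T)=0$, and check that $H^{-1}(\phi_H)$ is compatible there.

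\textbf{Step 2: amplitude.} $E_H$ is perfect, since it is a derived pushforward of a perfect complex along the smooth projective morphism $p_H$. Its cohomology lies a priori in degrees $[-1,2]$. At a closed point $x$, base change gives $E_H|_x = R\Gamma\bigl(\ad(E_G)\to \ad(E_G)\otimes K_X\bigr)[1]$. Then:
\begin{itemize}
\item $H^{-1}(E_H|_x)=\cA(\cE)=0$ by Theorem \ref{thm:automorphisms}.
\item $H^{2}(E_H|_x)\cong H^{-1}(E_H|_x)^\vee=0$ by Serre duality on the surface, using the self-duality of the two-term complex. This self-duality comes from the Killing form identification $\ad(E_G)\cong\ad(E_G)^\vee$, under which $[-,\vphi]$ is anti-self-adjoint.
\end{itemize}
By the standard argument with a local resolution by vector bundles and cohomology and base change (as in \cite[Lemma 4.2]{HT10}), the vanishing of the extreme fibre cohomologies at all points lets us truncate, so $E_H$ is locally $[F^0\to F^1]$.

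\textbf{Step 3: symmetry.} This is the main obstacle. I would construct the isomorphism $\theta: E_H\to E_H^\vee[1]$ globally as follows. Start from the Killing-form pairing $U_{\Higgs}\otimes U_{\Higgs}\to p_2^*K_X[-1]$; it is a genuine chain map because $\bigl\langle [a,\vphi],b\bigr\rangle + \bigl\langle a,[b,\vphi]\bigr\rangle=0$. Compose it with the relative trace $R(p_H)_*p_2^*K_X[2]\to\mathcal{O}$, which gives the relative Grothendieck-Verdier duality. The map $\theta$ is a quasi-isomorphism, which can be checked fibrewise by Serre duality. One must then verify $\theta^\vee[1]=\theta$; this reduces to the symmetry of the Killing form together with the graded sign of the cup product in degree $[1]$. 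The sign bookkeeping here is where I expect the most care is needed.

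With these steps in place, the theorem follows.
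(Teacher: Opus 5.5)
This is essentially the paper's argument. You get DM from $\cA(\cE)=\fz(\fg)=0$, build $\phi_H$ by completing the morphism of triangles from $\phi_B$ and $\phi_{H/B}$, control the amplitude via $H^{-1}(E_H|_x)=\cA(\cE)=0$, Serre duality and \cite[Lemma 4.2]{HT10}, and get symmetry from the Killing-form self-duality of $U_{\Higgs}$ together with relative Grothendieck--Verdier duality. Your explicit construction of $\theta$ and the check that it is symmetric supply something the paper only asserts.

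One correction to the shift. With $E_H$ in degrees $[0,1]$, your pairing $U_{\Higgs}\otimes U_{\Higgs}\to p_2^*K_X[-1]$ composed with the trace $R(p_H)_*p_2^*K_X[2]\to\mathcal{O}$ gives $E_H\otimes E_H\to\mathcal{O}[-1]$. So the map is $\theta\colon E_H\to E_H\dual[-1]$, with symmetry condition $\theta\dual[-1]=\theta$; this matches the paper's $E_H\cong (E_H)\dual[-1]$. By contrast, $E_H\dual[1]$ lives in degrees $[-2,-1]$, so no quasi-isomorphism to it can exist. The shift $[1]$ is the right one only in Behrend's convention for an obstruction theory $E\to\mathbb{L}$ with $E$ in degrees $[-1,0]$.
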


\section{Appendix} \label{sect:apdx}

Here we will show that a Chern connection $D_{\fE}$ on $\fE=\ad(E_{G})$ is always induced from a connection $D_{E}$ on $E_G$.

\begin{prop}\label{prop:indconn}
Let $E_G$ be a principal $G$ bundle and let $\fE := E_G \times_G \fg$ be the adjoint bundle. Assume that a connection $D$ on $\fE$ satisfies the Leibniz rule relative to the Lie bracket. That is, for any two sections $s, t\in \Gamma(\fE)$, $D$ satisfies:
$$
D[s, t] = [D s, t] + [s, D t].
$$
Moreover, we assume that the restriction of $D$ to $\fZ:= E_G \times_G \fz(\fg)$ is $d$, that is, $D|_{\fZ} = d$. (Note that $\fZ$ is canonically isomorphic to the trivial bundle $X \times \fz(\fg)$.) $D|_{\fZ} = d$ means $D|_{\fZ}(f_1,\dots,f_r) = (df_1,\dots,df_r)$ where $r = \dim_{\CC} \fz(\fg)$ and $(f_1,\dots,f_r)$ is a (local) section of $X \times \fz(\fg)$.)

Then the connection $D$ on $\fE$ is induced from a connection on $E_G$ compatible with the holomorphic structure.
\end{prop}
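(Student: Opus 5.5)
We would work locally and then glue. Fix a local holomorphic trivialization of $E_G$ over a small open set $V$, i.e. a local section $e : V \to E_G$. This identifies $\fE|_V \cong V \times \fg$, and we write $D = d + A$ with $A \in \Omega^1(V, \End(\fg))$. The Leibniz rule $D[s,t] = [Ds,t] + [s,Dt]$, applied to constant sections $s,t$, gives
\[
A[s,t] = [As,t] + [s,At].
\]
Hence $A$ takes values in the derivation algebra $\mathrm{Der}(\fg) \subset \End(\fg)$. The hypothesis $D|_{\fZ} = d$ says that $A$ kills $\fz(\fg)$. This makes sense because in the adjoint trivialization the subbundle $\fZ$ corresponds to the constant subbundle $\fz(\fg)$, on which $G$ acts trivially.

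Next we use the structure of derivations of a reductive Lie algebra. Write $\fg = \fz(\fg) \oplus [\fg,\fg]$ with $[\fg,\fg]$ semisimple. Any derivation $\delta$ preserves $[\fg,\fg]$, since $\delta[x,y] = [\delta x,y] + [x,\delta y]$. It also preserves $\fz(\fg)$: for $z$ central, $[\delta z,y] = \delta[z,y] - [z,\delta y] = 0$. By Whitehead's lemma (or the classical fact $\mathrm{Der}(\fh) = \ad(\fh)$ for semisimple $\fh$), $\delta|_{[\fg,\fg]} = \ad(x)$ for a unique $x \in [\fg,\fg]$. So derivations vanishing on $\fz(\fg)$ are exactly $\ad([\fg,\fg])$, and $\ad : [\fg,\fg] \to \mathrm{Der}(\fg)$ is injective. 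Applying this pointwise, $A = \ad(a)$ for a unique smooth $a \in \Omega^1(V, [\fg,\fg]) \subset \Omega^1(V,\fg)$.

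The local connection forms $a$ should then glue to a connection on $E_G$. Take another trivialization $e' = e\cdot g$ with transition function $g : V\cap V' \to G$. The adjoint connection forms transform by
\[
A' = \Ad(g^{-1}) A \Ad(g) + \Ad(g^{-1})\, d\Ad(g) = \ad\bigl(\Ad(g^{-1})a + g^{-1}dg\bigr).
\]
We need the matching transformation rule $a' = \Ad(g^{-1})a + g^{-1}dg$ for $a$ itself. Injectivity of $\ad$ on $[\fg,\fg]$ gives this only after projecting to $[\fg,\fg]$; the $\fz(\fg)$-component of $g^{-1}dg$ does not in general vanish. This is the main obstacle. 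We would handle it using the hypothesis $D|_\fZ = d$ more seriously. First we note that the $\fz(\fg)$-part of a connection on $E_G$ is a connection on the torus bundle $E_G/[G,G]$, which is invisible on $\fE$. Then we define the connection $1$-form on $E_G$ as the pair consisting of the $[\fg,\fg]$-part determined by $a$ and an arbitrary chosen Chern-type connection on $E_G/[G,G]$ for its $\fz$-part.

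The $[\fg,\fg]$-part alone is globally well defined. To see this, pass to the $\Ad(G)$-bundle $E_G\times_G \Ad(G) \subset \mathrm{Aut}(\fg)$: its connections are precisely the connections on $\fE$ taking values in $\ad(\fg) = \ad([\fg,\fg])$. Since $\Ad(G)$ and $G/Z(G)$ have Lie algebra $[\fg,\fg]$, and $E_G \to E_G/Z^0$ together with $E_G \to E_G/[G,G]$ gives a covering of the product, connections on the two quotients combine into a connection on $E_G$ inducing $D$. When $G$ is semisimple this step is empty, and we are simply identifying the frame bundle of $\fE$ reduced to $\Ad(G)$ with $E_G$ up to a finite cover.

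Finally, we check compatibility with the holomorphic structure. In a holomorphic trivialization, $D^{0,1} = \bar\partial_\fE$ forces $A^{0,1} = 0$, hence $a^{0,1} = 0$ by injectivity. Choosing the torus-bundle connection to be a Chern connection (or $d$ in a holomorphic flat frame) keeps the $(0,1)$-part zero as well. So the resulting connection on $E_G$ has $(0,1)$-part equal to the holomorphic structure and induces $D$.
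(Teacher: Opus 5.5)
Your proof is correct, but it handles the key difficulty differently from the paper.

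The paper never works with local connection forms. It fixes a reference connection $D_0$ on $E_G$, which exists by Remark~\ref{rema:chernconn}. It then observes that $\alpha = D - D_{\fE,0}$ is a \emph{global} section of $\Omega^1_X\otimes \mathrm{Der}(\fE)$. The same decomposition you use, $\mathrm{Der}(\fg)\cong \ad([\fg,\fg])\oplus\End(\fz(\fg))$, together with $D|_{\fZ}=D_{\fE,0}|_{\fZ}=d$, puts $\alpha$ in $\Omega^1_X\otimes(E_G\times_G[\fg,\fg])$. Lifting via the canonical splitting to $\wtil{\alpha}\in\Gamma(\Omega^1_X\otimes\fE)$ gives the connection $D_E=D_0+\wtil{\alpha}$.

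Because differences of connections are tensorial, the transformation-law problem you call the main obstacle never arises there. The $\fz(\fg)$-component of $g^{-1}dg$ is silently absorbed into $D_0$.

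Your route meets that obstacle head-on:
\begin{itemize}
\item You read the local forms $a$ as a connection on the $\Ad(G)$-bundle $E_G/Z(G)$, equivalently on $E_G/Z^0$.
\item You then use the isogeny $G\to G/Z^0\times G/[G,G]$ to combine it with a freely chosen connection on the torus bundle $E_G/[G,G]$.
\end{itemize}
This is longer and needs the finite-covering argument. The gluing can in fact be checked directly from the rules $b_{[\fg,\fg]}'=\Ad(g^{-1})b_{[\fg,\fg]}+\mathrm{pr}_{[\fg,\fg]}(g^{-1}dg)$ and $b_{\fz}'=b_{\fz}+\mathrm{pr}_{\fz}(g^{-1}dg)$.

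What your route buys is a clear structural picture. It shows why the $\fz(\fg)$-part of a connection on $E_G$ is invisible on $\fE$, and that lifts of $D$ differ exactly by $\fz(\fg)$-valued $1$-forms. In the paper this non-uniqueness is hidden in the choice of $D_0$.

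Two small points. After saying you would use $D|_{\fZ}=d$ ``more seriously,'' you do not actually use it again. It was already fully spent in showing that $A$ is inner, and the fix is purely structural. Also, the $\Ad(G)$-bundle is a reduction of the frame bundle of $\fE$, not a subset of $\mathrm{Aut}(\fg)$.

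Your holomorphic-compatibility step ($A^{0,1}=0\Rightarrow a^{0,1}=0$ by injectivity of $\ad$ on $[\fg,\fg]$) is more explicit than the paper's one-line claim. Both arguments tacitly assume $D^{0,1}=\overline{\partial}_{\fE}$, which is not stated in the proposition but holds in the application, where $D$ is a Chern connection.
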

\begin{proof}
By Remark \ref{rema:chernconn}, we can choose a Chern connection $D_0$ and it induces a connection $D_{\fE,0}$ on $\fE$. Thus it satisfies the above Leibniz rule by Lemma \ref{lemm:indchern}, and $D$ satisfies the Leibniz rule by assumption. 

Therefore, $\alpha := D - D_{\fE,0} \in \Gamma \left( X, \Omega^{1}_X \otimes \cE nd(\fE) \right)$ is a derivation-valued 1-form, that is, for any local sections $s, t$ of $\mathfrak{E}$, it satisfies the Leibniz rule:
$$ 
\alpha([s, t]) = [\alpha(s), t] + [s, \alpha(t)]. 
$$
Note that the space of derivations $Der(\fg)$ is isomorphic to $Der(\fg_{ss}) \oplus \End(\fz(\fg))$ where $\fg_{ss} := [\fg,\fg]$, and it is known that $Der(\fg_{ss})$ is isomorphic to the space of inner derivations. Thus we have $Der(\fg_{ss}) \cong \fg_{ss}$ and $Der(\fg) \cong Der(\fg_{ss}) \oplus \End(\fz(\fg))$. Furthermore we have
\[
Der(\fE) \cong (E_G \times_G \fg_{ss}) \oplus \End(\fZ)
\]
where $\fZ := E_G \times_G \fz(\fg)$, which is in fact isomorphic to the trivial bundle $X \times \fz(\fg)$. Therefore (local) sections of $\fZ$ are of the form $(f_1,\dots f_r)$ where $r = \dim_{\CC} \fz(\fg)$ and $f_1,\dots, f_r$ are (local) functions on $X$.

Since $D_{\fE,0}$ is a connection induced from $D_0$, by the definition of the induced connection \eqref{eq:indconn}, $D_{\fE,0}|_\fZ = d$. Moreover, $D|_\fZ = d$ by the assumption. Therefore $\alpha|_\fZ = d - d = 0$, and thus we have $\alpha \in \Gamma \left( X, \Omega^{1}_X \otimes (E_G \times_G \fg_{\ad}) \right)$.

Let $\ad : \fg \to \End(\fg)$ be the morphism sending $g \in \fg$ to $[g,-]$ and let $\fg_{\ad} := \text{Im}(\ad) \cong \fg/\fz(\fg)$. On the other hand, since $G$ is reductive, we have a decomposition
\[
\fg \cong \fz(\fg) \oplus [\fg,\fg] = \fz(\fg) \oplus \fg_{ss}.
\]
Thus we have $\fg_{\ad} \cong  \fg_{ss}$ and we have a split exact sequence: 
$$ 
0 \to \mathfrak{z}(\mathfrak{g}) \to \mathfrak{g} \xrightarrow{\ad} \mathfrak{g}_{ss} \to 0. 
$$
Since the above exact sequence splits, there is a section $s: \fg_{ss} \to \fg$ such that $\ad \circ s = \id_{\fg_{ss}}$. Then the section $s$ induces the injective morphism of vector bundles:
\[
Der(\fE) \cong \Omega_X^{1} \otimes (E_G \times_G \fg_{\ad} ) \hra \Omega_X^{1} \otimes (E_G \times_G \fg) = \Omega_X^{1} \otimes \fE. 
\]
Recall that $\alpha \in \Gamma(Der(\fE))$. Let $\wtil{\alpha}$ be the image of $\alpha$ in $\Gamma(X, \Omega_X^{1}\otimes \fE)$. Then we have $\ad(\wtil{\alpha}) = \alpha$. Therefore, if we define the new connection $D_E := D_0 + \wtil{\alpha}$, it induces the connection $D_{\fE,0} + \alpha = D$ on $\fE$. Moreover, since $\alpha$, $\wtil{\alpha}$, and $D_0$ are compatible with the holomorphic structure, so is $D_E = D_0 + \wtil{\alpha}$.

\end{proof}

\begin{coro} \label{coro:indchern}
Let $(E_G, \varphi)$ be an $L$-valued $G$-Higgs bundle. The Chern connection $D_{\mathfrak{E}}$ on $\mathfrak{E}=\ad(E_G)$ induced by the Hermitian metric $h_{\mathfrak{E}}$ satisfying the quiver vortex equation \eqref{eq:VW} is always induced from a connection $D_E$ on $E_G$.
\end{coro}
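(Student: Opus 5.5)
The plan is to reduce the corollary to Proposition \ref{prop:indconn}. For this we must check its two hypotheses for the Chern connection $D_\fE$ of the Yang--Mills--Higgs metric $h_\fE$:
\begin{enumerate}
\item $D_\fE$ is a derivation of the Lie bracket, i.e.\ $D_\fE[s,t]=[D_\fE s,t]+[s,D_\fE t]$;
\item $D_\fE$ restricts to the trivial connection $d$ on $\fZ = X\times \fz(\fg)$.
\end{enumerate}
Once both hold, Proposition \ref{prop:indconn} gives a connection $D_E$ on $E_G$, compatible with the holomorphic structure, inducing $D_\fE$.

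For the Leibniz rule, view the Lie bracket as a holomorphic section $\beta \in H^0(X, \fE\dual\otimes\fE\dual\otimes \fE)$. The identity $D_\fE[s,t]=[D_\fE s,t]+[s,D_\fE t]$ says exactly that $\beta$ is parallel for the connection induced by $D_\fE$ on $\fE\dual\otimes\fE\dual\otimes\fE$. This is the same mechanism as in Lemma \ref{lem:parallel}. By Theorem \ref{thm:stabcompare}, $(\fE,\vphi)$ is polystable, and the metric solves \eqref{eq:VW} with $\tau=0$. Then $\fE^{\otimes a}\otimes(\fE\dual)^{\otimes b}$ with the induced Higgs field and metric again satisfies the vortex equation, with constant $0$. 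I would then run the Bochner argument of Lemma \ref{lem:parallel} on $\beta$, using the Jacobi identity, which gives $[\vphi,\beta]=0$, i.e.\ $\beta$ is $\vphi$-invariant. The result is $2\Delta_d|\beta|^2 = |D\beta|^2 + |\vphi^*\cdot\beta|^2\ge 0$, so $\beta$ is parallel.

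For the center, note that $\fZ\subset\fE$ is the kernel of $\ad:\fE\to\cE nd(\fE)$. It is a holomorphically trivial subbundle spanned by global holomorphic sections $z_1,\dots,z_r$ (constant central elements), and these satisfy $[z_j,\vphi]=0$. Apply the same Bochner argument directly to each $z_j$ as a section of $\fE$ (rather than of $\cF$). The vortex equation with $\tau=0$ and $[z_j,\vphi]=0$ give $\langle [\vphi,\vphi^*]z_j,z_j\rangle = |\vphi^* z_j|^2\ge 0$, so $D_\fE z_j=0$. Hence in the frame $z_1,\dots,z_r$ the restriction $D_\fE|_\fZ$ is $d$.

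The main obstacle is the curvature computation in the first step. One must check that on tensor bundles the term $[\vphi,\vphi^*]$ acts as a derivation, and that the resulting curvature term for a $\vphi$-invariant section reduces to a nonnegative norm, generalizing Lemma \ref{lemm:jacobi1}. I would first verify this on $\fE\otimes\fE$ and on $\fE\dual$ separately, then combine the two. A cleaner alternative is to note that if $h_\fE$ is induced from a $K$-reduction of $E_G$, uniqueness of the metric makes both properties automatic. I would use this as a fallback if the tensor computation gets messy.
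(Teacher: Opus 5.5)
Your proposal is correct. It has the same skeleton as the paper: reduce to Proposition \ref{prop:indconn}, and obtain both hypotheses from a Bochner argument. That argument is applied to the bracket, viewed as a holomorphic section of $\cH om(\fE\otimes\fE,\fE)$, and to the central sections.

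Where you differ is the treatment of the curvature term. The paper makes it vanish outright in two steps:
\begin{enumerate}
\item it asserts $[\vphi,\vphi^{*_h}]|_\fZ=0$;
\item it gets $i\Lambda_\omega F^{1,1}_\cW B=0$ by writing $i\Lambda_\omega F^{1,1}_\fE=[\psi,-]$ with $\psi=-[\vphi,\vphi^{*_h}]$ and applying the Jacobi identity.
\end{enumerate}
Both steps tacitly treat $\vphi^{*_h}$ as an inner derivation of $\fE$ (resp.\ as annihilating $\fZ$). That presupposes that $h_\fE$ is compatible with the Lie structure, which is close to what is being proved.

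You use only that $\beta$ and the $z_j$ are killed by $\vphi$, via the Jacobi identity resp.\ centrality. The curvature term then becomes $|\vphi^*\cdot\beta|^2$ resp.\ $|\vphi^{*_h}z_j|^2\ge 0$, which is valid for any solution of \eqref{eq:VW}. Applying the Bochner formula to the $z_j$ as sections of $\fE$ also avoids assuming in advance that $D_\fE$ preserves $\fZ$.

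The cost of your route is the tensor-bundle check you flag, and it goes through. Let $W=\fE\dual\otimes\fE\dual\otimes\fE$ with induced Higgs field $\Phi_W$. In a local orthonormal frame of $L$, the components of $\Phi_W$ acting on one tensor factor commute with the adjoint components acting on a different factor. Hence $[\Phi_W,\Phi_W^*]$ is the derivation extension of $[\vphi,\vphi^{*_h}]$, exactly as $F_W$ is that of $F_\fE$. So the vortex equation holds on $W$ with constant $(1-2)\tau=0$.

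Two minor remarks. First, the appeal to Theorem \ref{thm:stabcompare} is unnecessary, since the corollary presupposes the metric. Second, discard the fallback. Knowing that $h_\fE$ comes from a $K$-reduction is essentially the conclusion, and solutions of \eqref{eq:VW} on a polystable object are unique only up to automorphisms, so uniqueness cannot identify $h_\fE$ with such a metric.
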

\begin{proof}
By Proposition \ref{prop:indconn}, it suffices to show that $D_\fE|_{\fZ} = d$ and $D_{\fE}$ satisfies the Leibniz rule relative to the Lie bracket.

We first show that $D_\fE|_\fZ = d$. By \eqref{eq:VW} combined with the fact that $\tau=0$ (since $\deg(\fE) = 0$), we have 
$$
i \Lambda_\omega F_\fE^{1,1} + [\vphi, \vphi^{*_h}] = 0 \in \End(\fE).
$$
Since $\vphi \in \Hom (\fE, \fE\otimes L)$, we observe that $[\vphi, \vphi^{*_h}]|_\fZ = 0$. 
\[
i \Lambda_\omega F_\fE^{1,1}|_\fZ = 0.
\]
Then, parallel to \eqref{eq:bochner}, for any $s \in H^0(X,\fZ)$, we have
$$ 
2\Delta_d |s|^2 = |D_{\frak{Z}} s|^2 - \langle i\Lambda_{\omega}F_{\frak{Z}} s, s \rangle  = |D_{\fZ} s|^2 \ge 0
$$
where $D_\fZ := D_\fE|_\fZ$, $F_{\fZ} := F_{\fE}|_{\fZ}$. Thus $|s|^2$ is a subharmonic function on the compact manifold $X$, therefore $|s|^2$ is a constant by the maximum principle. Hence $\Delta_d(|s|^2) = 0$ and therefore $|D_\fZ s|^2 = 0$, $D_\fZ s = 0$. Since $\fZ$ is isomorphic to a trivial bundle $X \times \fz(\fg)$, we have constant sections $e_1,\dots,e_r $, which form a basis on each fiber. Since $D_{\fZ}(e_1) = \dots = D_{\fZ}(e_r) = 0$, for any (local) section of the form $s = f_1 e_1 + \dots + f_r e_r$, we have $D_{\fZ}(s) = df_1 e_1 + \dots + df_r e_r$. Therefore, we also obtain $D_\fE|_\fZ = D_\fZ = d.$

Next we show that $D_{\fE}$ satisfies the Leibniz rule relative to the Lie bracket. Let $\cW := \cH om(\fE \otimes \fE, \fE)$. Let the Lie bracket $B: \fE \otimes \fE \xrightarrow{[-,-]} \fE$ be the morphism defined by the Lie bracket. Then $B$ is a holomorphic (global) section of $\cW$. The connection $D_\fE$ induces a connection $D_\cW$ in a natural way. For (local) sections $s,t$ of $\fE$, we have
$$ 
(D_{\mathcal{W}} B)(s \otimes t) = D_{\mathfrak{E}}\big( B(s \otimes t) \big) - B(D_{\mathfrak{E}}s \otimes t) - B(s \otimes D_{\mathfrak{E}}t). 
$$
By substituting $B(s \otimes t) = [s,t]$, we have
$$ 
(D_{\mathcal{W}} B)(s \otimes t) = D_{\mathfrak{E}}([s,t]) - [D_{\mathfrak{E}}s, t] - [s, D_{\mathfrak{E}}t]. 
$$
Therefore, the condition that $D_{\fE}$ satisfies the Leibniz rule relative to the Lie bracket is equivalent to $D_\cW B = 0$.

Since $B$ is a holomorphic section of $\cW$, by \eqref{eq:bochner}, we have
\begin{align} \label{eq:bochner2}
2\Delta_d (|B|^2) = |D_{\mathcal{W}} B|^2 - \langle (i\Lambda_{\omega} F^{1,1}_{\mathcal{W}}) B, B \rangle. 
\end{align}

Note that we have $i\Lambda_{\omega} F^{1,1}_{\mathfrak{E}} = -[\varphi, \varphi^{*_h}]$. Similar to the proof of Lemma \ref{lemm:endconn} (and since $F_\cW = (D_{\cW})^2$), we have
\begin{align*}
(i\Lambda_{\omega}F^{1,1}_{\cW}B)(s \otimes t) & = i\Lambda_{\omega} F^{1,1}_{\fE}(B(s \otimes t)) - B(i\Lambda_{\omega} F^{1,1}_\fE s \otimes t) - B(s \otimes i\Lambda_{\omega} F^{1,1}_{\fE} t) \\
& = i\Lambda_{\omega} F^{1,1}_\fE([s,t]) - [ i\Lambda_{\omega} F^{1,1}_\fE s, t] - [s, i\Lambda_{\omega} F^{1,1}_\fE t].
\end{align*}
We will show that the last term vanishes. Recall the equation $i \Lambda_\omega F_\fE^{1,1} + [\vphi, \vphi^{*_h}] = 0$, and recall that $\vphi : \fE \to \fE \otimes L$ is defined by an abuse of notation, $\vphi = [-, \vphi] \in \Hom(\fE, \fE \otimes L)$, which is the morphism given by the adjoint action of the Higgs field $\vphi \in H^0(X, \fE \otimes L)$. Therefore, by the Jacobi identity, we have 
$$
i\Lambda_{\omega} F^{1,1}_\fE([s,t]) - [ i\Lambda_{\omega} F^{1,1}_\fE s, t] - [s, i\Lambda_{\omega} F^{1,1}_\fE t] = [\psi,[s,t]] - [[\psi,s],t] - [s, [\psi,t]]=0
$$ 
where $\psi := -[\varphi, \varphi^{*_h}]$. Hence we obtain $i\Lambda_{\omega}F^{1,1}_{\cW}B = 0$.

Then, by \eqref{eq:bochner2}, we have $\Delta_d (|B|^2) = |D_\cW B|^2 \ge 0$. Therefore, by the maximum principle for subharmonic functions on the compact manifold $X$, $|B|^2$ is a constant so that $\Delta_d (|B|^2) = |D_\cW B|^2 = 0$, $D_\cW B = 0$.  
\end{proof}

\begin{rema}\label{rema:chernconn}
Let $X$ be a compact K\"ahler manifold and let $E_G$ be a holomorphic principal $G$-bundle, where $G$ is a reductive group. Let $K \subset G$ be a maximal compact subgroup. Then, by \cite[Section 2]{Bis05}, any smooth reduction of the structure group $E_K \subset E_G$ induces a unique connection on $E_G$ compatible with the holomorphic structure (Chern connection).

Moreover, the reduction of structure group $E_K \subset E_G$ is equivalent to a section of the fiber bundle $E_G/K \to X$. But since $G/K$ is contractible, this section always exists. In summary, we can always find a connection on $E_G$ compatible with the holomorphic structure.
\end{rema}

\begin{lemm} \label{lemm:indchern}
Let $\nabla$ be a connection on the principal $G$-bundle $E_G \to X$ and let $\nabla_{ad}$ be the induced connection on $\fE = E_G \times_G \fg$. Then $\nabla_{ad}$ satisfies the Leibniz rule. 
\end{lemm}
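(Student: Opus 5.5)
The plan is to prove the identity $\nabla_{ad}[s,t] = [\nabla_{ad}s,t] + [s,\nabla_{ad}t]$ in the principal bundle picture. There $\nabla$ is a $G$-equivariant, $\fg$-valued connection $1$-form $\theta$ on $E_G$, and a section $s$ of $\fE$ is a $G$-equivariant function $\hat s: E_G \to \fg$, meaning $\hat s(pg) = \Ad(g^{-1})\hat s(p)$. The induced connection is given by the formula \eqref{eq:indconn} referred to earlier: $\widehat{\nabla_{ad}s} = d\hat s + \ad(\theta)\hat s = d\hat s + [\theta,\hat s]$, restricted to horizontal vectors. Equivalently, in a local trivialization $E_G|_V \cong V\times G$ with connection form $A \in \Omega^1(V,\fg)$, we have $\nabla_{ad}s = ds + [A,s]$. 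I will carry out the computation in this local form.

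The key steps are as follows. First, the bracket on $\fE$ is induced fiberwise from the bracket on $\fg$, and $\Ad(g)$ is a Lie algebra automorphism. Hence $[s,t]$ corresponds to the pointwise bracket $[\hat s,\hat t]$, and locally it is just the pointwise bracket of $\fg$-valued functions. Second, since the bracket is bilinear with constant structure constants, we have $d[s,t] = [ds,t] + [s,dt]$. Third, the Jacobi identity gives
\[
[A,[s,t]] = [[A,s],t] + [s,[A,t]].
\]
Here $A$ is a $1$-form, but we pair it with a tangent vector first, so only the Jacobi identity for $\fg$ is used. Adding the second and third steps yields the Leibniz rule on $V$. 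Both sides are globally defined and agree on every trivializing open set, so the identity holds on all of $X$.

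The only point requiring care is checking that the local expression $ds + [A,s]$ is independent of the trivialization. If it were not, the local identity would not be meaningful. Under a gauge change $g$, we have $s \mapsto \Ad(g^{-1})s$ and $A \mapsto \Ad(g^{-1})A + g^{-1}dg$. The term $[g^{-1}dg, s']$ then cancels exactly the derivative of $\Ad(g^{-1})$. This is the standard compatibility of the induced connection, so I would cite it rather than recompute it. Alternatively, the computation can be done directly upstairs with $\theta$ on horizontal vectors, which avoids the gauge check entirely. I expect no real obstacle beyond this bookkeeping.
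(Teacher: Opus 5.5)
Your proposal is correct and follows essentially the same argument as the paper: identify sections of $\fE$ with $G$-equivariant $\fg$-valued functions, write the induced connection as $d + [\theta,-]$, and combine the product rule $d[f_1,f_2] = [df_1,f_2] + [f_1,df_2]$ with the Jacobi identity applied to $[\theta,[f_1,f_2]]$. The only difference is that you mainly compute in a local trivialization and therefore add the gauge-independence check, whereas the paper computes directly upstairs on $E_G$; you also mention this alternative yourself, and it removes the need for that check.
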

\begin{proof}
Note that the connection $\nabla$ is represented by a connection 1-form $\omega \in \Omega^1 (E_G,\fg)$. We also note that every (local) section $s \in \Gamma(U,\fE)$ for an open subset $U \subset X$ corresponding to a smooth function $f_s : E_G \to \mathfrak{g}$ satisfies the $G$-equivariance condition:
$$ 
f_s(p \cdot g) = \text{Ad}(g^{-1}) \big( f_s(p) \big) 
$$
for any $p \in E_G|_U$ and $g \in G$ (\cite[Chapter II, Example 5.2]{KN63}). Then the induced connection $\nabla_{ad}$ on $\fE$ is defined by
\begin{align}\label{eq:indconn}
\nabla_{ad}(f_s) := d f_s + [\omega,f_s].
\end{align}
(See \cite[Lemma in page 115]{KN63} and \cite[Chapter II, Lemma 1 (1)]{KN63} for details.) Note that in fact $\nabla_{ad}(f_s) \in \Omega^1(E_G|_U,\fg)$ is horizontal, hence it becomes an element of $\Gamma(X, \Omega_X^1 \otimes \fE)$.

Now let $f_1, f_2$ be $G$-equivariant functions corresponding to the local sections $s_1$ and $s_2$ of $\fE$. Then the section $[s_1, s_2]$ corresponds to the pointwise Lie bracket of the functions $[f_1, f_2]$. Then we have
$$ 
\nabla_{ad}([f_1, f_2]) = d([f_1, f_2]) + [\omega, [f_1, f_2]]. 
$$
Then, by the product rule for differentiation, we have
$$ 
d([f_1, f_2]) = [d f_1, f_2] + [f_1, d f_2].
$$
Moreover, by the Jacobi identity, we have:
$$ 
[\omega, [f_1, f_2]] = [[\omega, f_1], f_2] + [f_1, [\omega, f_2]].
$$
Therefore, the induced connection $\nabla_{ad} = d + [\omega,-]$ also satisfies the Leibniz rule.

\end{proof}


\begin{thebibliography}{9}

\bibitem{Alp26}
J. Alper, \textit{Stacks and Moduli}, 2026.

\bibitem{ACGP03}
L. Álvarez-Cónsul and O. García-Prada,
\textit{Hitchin-Kobayashi correspondence, quivers, and vortices},
Commun. Math. Phys. \textbf{238} (2003), 1--33.

\bibitem{ACGP03b}
L. Álvarez-Cónsul and O. García-Prada,
\textit{Hitchin-Kobayashi correspondence, quivers, and vortices},
arXiv preprint math/0112161 (2001).

\bibitem{AB01}
B. Anchouche, I. Biswas,
\textit{Einstein-Hermitian connections on polystable principal bundles over a compact K{\"a}hler manifold},
American Journal of Mathematics. \textbf{123(2)} (2001), 207--228.

\bibitem{BS09}
I. Biswas and G. Schumacher,
\textit{Yang-Mills equation for stable Higgs sheaves},
Int. J. Math. \textbf{20} (2009), 541--556.

\bibitem{Bis05}
I. Biswas, \textit{Stable bundles and extension of structure group},
Differential Geometry and its Applications.
\textbf{23(1)} (2005), 67--78.

\bibitem{Bis09}
I. Biswas,
\textit{On the stable principal Higgs sheaves},
Differential Geom. Appl. \textbf{27} (2009), 344--351.

\bibitem{CL12}
H. -L. Chang and J. Li,
\textit{Gromov--Witten invariants of stable maps with fields},
International mathematics research notices.
\textbf{2012(18)} (2012), 4163--4217.



\bibitem{Falt93}
G. Faltings, Stable G-bundles and projective connections, J. Alg. Geom. \textbf{2(3)} (1993), 507--568.

\bibitem{HR14}
J. Hall, D. Rydh,
\textit{The Hilbert Stack}, Advances in Mathematics. \textbf{253} (2014), 194–-233.

\bibitem{Hit87}
N. J. Hitchin,
\textit{The self-duality equations on a Riemann surface},
Proceedings of the London Mathematical Society.
\textbf{3(1)} (1987), 59--126.

\bibitem{HT10}
D. Huybrechts, R. P. Thomas,
\textit{Deformation-obstruction theory for complexes via Atiyah and Kodaira–Spencer classes}, Mathematische Annalen.
\textbf{346(3)} (2010), 545--569.

\bibitem{KMMP25}
D. Kern et al, 
\textit{Derived moduli of sections and push-forwards},
Selecta Mathematica.
\textbf{31(2)} (2025), 40.

\bibitem{Ko87}
S. Kobayashi, \textit{Differential Geometry of Complex Vector Bundles}, Publications of the Math. Society of Japan, vol. 15, Iwanami Shoten Publishers and Princeton University Press, (1987).

\bibitem{KN63}
S. Kobayashi, K. Nomizu, \textit{Foundations of Differential Geometry, Volume 1}, Interscience Publishers, (1963).

\bibitem{Man12}
C. Manolache, \textit{Virtual pull-backs},
Journal of Algebraic Geometry.
\textbf{21(2)} (2012), 201--245.


\bibitem{Ram75}
A. Ramanathan,
\textit{Stable principal bundles on a compact Riemann surface},
Math. Ann. \textbf{213} (1975), 129--152.

\bibitem{Sch25}
S. Schirren, \textit{A virtual structure for symplectic Higgs bundles}, arXiv preprint arXiv:2510.24531 (2025).

\bibitem{Sim88}
C. T. Simpson,
\textit{Constructing variations of Hodge structure using Yang-Mills theory and applications to uniformization},
Journal of the American Mathematical Society.
(1988), 867--918.

\bibitem{Sim94}
C. T. Simpson, \textit{Moduli of representations of the fundamental group of a smooth projective variety I}, Publications Math{\'e}matiques de l'IH{\'E}S. \textbf{79} (1994), 47--129.

\bibitem{Tan14}
Y. Tanaka,
\textit{Stable sheaves with twisted sections and the Vafa-Witten equations on smooth projective surfaces},
Manuscripta Math. \textbf{146} (2015), 351--358.

\bibitem{TT19}
Y. Tanaka and R. P. Thomas, \textit{Vafa-Witten invariants for projective surfaces I: stable case}, 
Journal of Algebraic Geometry.
\textbf{29(4)} (2019).

\end{thebibliography}
\end{document}